\theoremstyle{plain}
\newtheorem{thm}{Theorem}
\newtheorem{rem}[thm]{Remark}
\newtheorem{prop}[thm]{Proposition}
\numberwithin{equation}{section} \numberwithin{thm}{section}
\begin{document}

\title[Cubic wave equation on $\mathbb{T}^{2}$]
{On the interpolation with the potential bound for global solutions of the defocusing cubic wave equation on
$\mathbb{T}^{2}$}

\author{Tristan Roy}

\address{Universit\'e de Cergy-Pontoise and Nagoya University }
\email{tristanroy@math.nagoya-u.ac.jp}


\vspace{-0.3in}

\begin{abstract}

Consider the solutions of the defocusing cubic wave equation
\begin{align*}
\partial_{tt} u  - \Delta u  & = -u^{3},
\end{align*}
with real data in $ H^{s} (\mathbb{T}^{2}) \times H^{s-1} (\mathbb{T}^{2})$, $s > \frac{2}{5}$. We prove that the solutions exist globally in time
by contradiction. Assuming that one of the maximal times of existence is finite, we prove that the Sobolev norm of each of these solutions is bounded
in an open neighborhood of it by using the I-method (see e.g \cite{almckstt}). It consists of introducing a mollified energy and estimating its growth
in this neighborhood. The growth is first estimated on small subintervals by means of dispersive bounds; then it is estimated on the whole interval by
an iteration process. In order to enlarge the size of subintervals on which the dispersive bounds hold, we interpolate with the potential bound for
the low frequency part. Our results complement those obtained in \cite{bourgbook}.
\end{abstract}

\maketitle
\tableofcontents

\section{Introduction}

In this paper we study the defocusing cubic wave equation on the torus $\mathbb{T}^{2} := \mathbb{R}^{2} / \mathbb{Z}^{2}$, i.e

\begin{equation}
\partial_{tt} u - \triangle u = -u^{3},
\label{Eqn:DefocCubWave}
\end{equation}
with data $ (u(0),\partial_{t} u(0)):= (u_{0},u_{1}) \in H^{s}(\mathbb{T}^{2}) \times H^{s-1} (\mathbb{T}^{2})$, $s \in \mathbb{R}$. Here $H^{s}(\mathbb{T}^{2})$ denotes
the standard inhomogeneous Sobolev space, i.e the closure of smooth functions with respect to the
following norm

\begin{equation}
\| f \|_{H^{s}(\mathbb{T}^{2})} := \left\| \langle n \rangle^{s} \hat{f}(n) \right\|_{l^{2}(\mathbb{Z}^{2})} \cdot
\nonumber
\end{equation}
Here $\hat{f}$ denotes the Fourier transform of $f$, i.e

\begin{equation}
\hat{f}(\xi) := \int_{\mathbb{T}^{2}} f(x) e^{-i 2 \pi \xi \cdot x} \, dx.
\nonumber
\end{equation}
It is well-known (see e.g \cite{linsog}) that given data $(u_{0},u_{1}) \in H^{s}(\mathbb{T}^{2}) \times H^{s-1}(\mathbb{T}^{2})$, $s > \frac{1}{4}$, there exist a time of local existence $T_{l} > 0$ and a unique solution $ (u,\partial_{t} u) $ that lies in a subset of $ \mathcal{C} \left( [0,T_{l}], H^{s}(\mathbb{T}^{2}) \right) \times \mathcal{C} \left( [0,T_{l}],  H^{s-1}(\mathbb{T}^{2})  \right) $ and that satisfies in the distributional sense

\begin{equation}
\begin{array}{l}
u(t) = \cos{(tD)} u_{0}  + \frac{\sin{(tD)}}{D} u_{1}
 - \int_{0}^{t} \frac{\sin((t-t')D)}{D} u^{3}(t') \, dt'.
\end{array}
\nonumber
\end{equation}
Here $D$ denotes the multiplier defined in the Fourier domain by $\widehat{Df}(\xi):= |\xi| \hat{f}(\xi)$. By iterating the process, one can extend the
domain of the solution to the maximal interval of existence $I_{max}:= (T_{-},T_{+})$. Next we would like to know whether we can
choose the maximal times of existence $T_{+}$ and $-T_{-}$ to be infinite. In other words, does the solution exist globally in time? Since the time of local existence depends on the norm of the initial data, it is sufficient to find for all time $T > 0$ a finite \textit{a priori} bound of the $H^{s} \times H^{s-1}$ norm of the solution. The global existence for real data $ (u_{0},u_{1}) \in H^{s} (\mathbb{T}^{2}) \times H^{s-1} (\mathbb{T}^{2})$, $s \geq 1$, follows immediately from the conservation of the energy

\begin{equation}
E(u) := \frac{1}{2} \int_{\mathbb{T}^{2}} |\partial_{t} u(t,x)|^{2} \, dx + \frac{1}{2} \int_{\mathbb{T}^{2}} |\nabla u(t,x)|^{2} \, dx +
\frac{1}{4} \int_{\mathbb{T}^{2}} |u(t,x)|^{4} \, dx.
\nonumber
\end{equation}
It remains to understand the global behavior of the solution with real data $ (u_{0},u_{1}) \in H^{s}(\mathbb{T}^{2}) \times H^{s-1}(\mathbb{T}^{2})$, $s < 1$. Bourgain proved in \cite{bourgbook} that the solution with real data $ (u_{0},u_{1}) \in H^{s}(\mathbb{T}^{2}) \times H^{s-1}(\mathbb{T}^{2})$, $s \geq \frac{1}{2}$, exists globally in time. Our main theorem is the following:
\vspace{5mm}
\begin{thm}
The solution of the defocusing cubic wave equation with real data  $(u_{0},u_{1}) \in H^{s}(\mathbb{T}^{2}) \times H^{s-1}(\mathbb{T}^{2})$, $s > \frac{2}{5}$, exists globally in time.
\end{thm}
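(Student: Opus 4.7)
The plan is to argue by contradiction: assume that one of the maximal times, say $T_{+}$, is finite, and derive an a priori bound for $\|(u(t),\partial_{t}u(t))\|_{H^{s}\times H^{s-1}}$ on an open neighborhood of $T_{+}$. Since the local existence time depends only on the size of the data, such a bound would contradict the assumed blow-up.

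The main tool is the I-method. Introduce a smooth Fourier multiplier $I=I_{N}$ which acts as the identity on frequencies $|\xi|\le N$ and as $(N/|\xi|)^{1-s}$ on $|\xi|\ge 2N$, so that $I:H^{s}\to H^{1}$ with norm comparable to $N^{1-s}$. If $u$ were $H^{1}$-regular, the classical energy $E(u)$ would be conserved; here we work with the mollified energy
\begin{equation*}
E(Iu)(t)=\tfrac{1}{2}\|\partial_{t}Iu(t)\|_{L^{2}}^{2}+\tfrac{1}{2}\|\nabla Iu(t)\|_{L^{2}}^{2}+\tfrac{1}{4}\|Iu(t)\|_{L^{4}}^{4},
\end{equation*}
which is \emph{almost} conserved, since $I$ does not commute with the cubic nonlinearity. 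A standard commutator-type computation gives an explicit formula for $\tfrac{d}{dt}E(Iu)$ involving $I(u^{3})-(Iu)^{3}$. The game is to bound the resulting increment over a well-chosen subinterval in terms of $E(Iu)$, uniformly on $[0,T_{+})$, and then iterate.

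The execution splits into two scales. First, on a short subinterval $J$ where the local-well-posedness / Strichartz machinery (for the wave equation on $\mathbb{T}^{2}$) provides good $L^{q}_{t}L^{r}_{x}$ control of $Iu$, one estimates the time-integrated commutator by a power of $N^{-\alpha}$ times a polynomial in $E(Iu)$. The length of such a subinterval is dictated by the dispersive estimates and normally shrinks as $s\downarrow 1/4$. The crucial new idea, announced in the abstract, is to enlarge these subintervals by interpolating the low-frequency piece $P_{\le N}u$ between its $\dot{H}^{s}$-type norm and the \emph{potential} bound $\|P_{\le N}u\|_{L^{4}}\lesssim E(Iu)^{1/4}$, which is essentially conservation-of-energy information embedded in $E(Iu)$ itself. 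This interpolation replaces a raw Sobolev bound with a bound controlled by $E(Iu)$, so the time of local existence / Strichartz validity can be taken longer, and the per-subinterval growth of $E(Iu)$ becomes a negative power of $N$ times a power of $E(Iu)$.

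Second, I would iterate this increment bound over a chain of subintervals covering $[0,T_{+}]$. The number of subintervals grows with $N$, and the total growth of $E(Iu)$ on $[0,T_{+}]$ is bounded by (number of intervals)$\times$(per-interval increment). Choosing $N=N(T_{+})$ large enough makes this total growth $o(N^{2(1-s)})$, so that $E(Iu)(t)\lesssim E(Iu)(0)\lesssim N^{2(1-s)}$ uniformly on $[0,T_{+})$; undoing $I$ then gives $\|u(t)\|_{H^{s}}\lesssim 1$, contradicting the assumed blow-up. The threshold $s>2/5$ is precisely the exponent at which the arithmetic of (length of subinterval from the potential interpolation)$\times$(per-step gain in $N$)$\times$(number of steps) closes.

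The main obstacle I expect is the bookkeeping in the second step: balancing the three parameters (the frequency cutoff $N$, the length $\tau$ of a subinterval, and the power of $E(Iu)$ in the growth bound) so that the iterated increment remains acceptable down to $s>2/5$. The delicate sub-step is proving the Strichartz-type estimate for $Iu$ on an interval whose length is allowed to depend on the $L^{4}$ potential bound rather than on the full $H^{s}$ norm — this is where the interpolation with the potential bound really does work. Once that enlarged local-in-time estimate is in hand, the commutator estimate for $I(u^{3})-(Iu)^{3}$ via Littlewood--Paley decomposition and the iteration are essentially bookkeeping.
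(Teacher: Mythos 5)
Your sketch accurately captures the first half of the argument: the I-method setup, the almost-conservation of $E(Iu)$, the iteration over subintervals, and, importantly, the interpolation with the potential bound $\|Iu\|_{L^{\infty}_{t}L^{4}_{x}}\lesssim N^{(1-s)/2}$ for the low-frequency factors in the local Strichartz estimate. But this is exactly the content of Section~\ref{Section:Gwp12}, and, as the paper's arithmetic shows, it closes only down to $s>\tfrac{4}{9}$: the subinterval length one can afford from the potential-bound-enhanced local estimate is $|J|\sim N^{s-1}$, and feeding $Var(E(Iu),J)\lesssim N^{4(1-s)}\bigl(|J|^{3/4}N^{-5/4+}+|J|^{1/2}N^{-3/2+}\bigr)$ through $\epsilon_{0}/|J|$ iterations requires $s>\tfrac{4}{9}$, not $s>\tfrac{2}{5}$. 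Your claim that "$s>2/5$ is precisely the exponent at which the arithmetic \ldots closes" overstates what the potential interpolation alone buys.

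The missing ingredient is the adapted linear-nonlinear decomposition (Section~\ref{Section:Gwpbelow12}, following \cite{troy}). On each subinterval $J=[a,b]$ one writes $u=u_{l}^{J}+u_{nl}^{J}$ with $u_{l}^{J}$ the free evolution from time $a$ and $u_{nl}^{J}$ the Duhamel term. The linear part obeys global-in-$[0,\epsilon_{0}]$ bounds directly from the kinetic bound via Strichartz, while the nonlinear part is \emph{smoother} than $H^{s}$ — Bourgain's observation — and Proposition~\ref{Prop:EstLinNonlin} quantifies this gain: $X_{nl}\lesssim\alpha N^{1-s}$ with $\alpha\ll 1$ as long as $|J|\lesssim N^{(2s-1)-}$. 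Since the variation of $E(Iu)$ only sees terms where at least one Paley--Littlewood piece sits on high frequencies, this gain can be exploited there; the decomposition is re-centered on each subinterval (hence "adapted") precisely so that the gain is available from $0$ to $\epsilon_{0}$ rather than only near $t=0$. The enlarged subinterval size $N^{(2s-1)-}$ (larger than $N^{s-1}$ for $s<1$) is what reduces the iteration count enough to push the threshold to $s>\tfrac{2}{5}$. The potential-bound interpolation you emphasized is still used inside the proof of the gain of regularity (and indeed Remark~\ref{Rem:InterpAgain} shows the argument fails without it), but it works in concert with this extra decomposition, which your proposal does not mention and without which the claimed threshold is not reached.
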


\vspace{5mm}
\begin{rem}
A straightforward modification of the arguments in this paper shows
that the theorem also holds for solutions of the equation $\partial_{tt} u -\triangle u = - |u|^{2}u $ with complex-valued data
$(u_{0},u_{1}) \in H^{s}(\mathbb{T}^{2}) \times H^{s-1}(\mathbb{T}^{2})$, $s > \frac{2}{5}$.
\end{rem}
\begin{rem}
In the sequel of this paper we assume implicitly that the data is real.
\end{rem}

Now we explain the main ideas and how this paper is organized. Since the energy may be infinite for data $(u_{0},u_{1}) \in
H^{s}(\mathbb{T}^{2}) \times H^{s-1}(\mathbb{T}^{2})$, one cannot use it to control the $H^{s} \times H^{s-1}$ norm of the solution at a given time. Instead we introduce the mollified energy

\begin{equation}
E(Iu(t)) := \frac{1}{2} \int_{\mathbb{T}^{2}} \left( \partial_{t} I u(t,x) \right)^{2} \, dx +  \frac{1}{2} \int_{\mathbb{T}^{2}} |\nabla I u(t,x)|^{2} \, dx + \frac{1}{4} \int_{\mathbb{T}^{2}} \left( I u(t,x) \right)^{4} \, dx .
\nonumber
\end{equation}
Here the multiplier $I$ is defined in the Fourier domain by

\begin{equation}
\widehat{If}(\xi) = m(\xi) \hat{f}(\xi),
\end{equation}
with $m(\xi):= \eta \left( \frac{\xi}{N} \right)$, $N >> 1$ a parameter to be chosen, and $\eta$ such that

\begin{equation}
\left\{
\begin{array}{l}
\eta (\xi) \; radial \; smooth, \; decreasing, \; 0 \leq  \eta \leq 1 \\ \\
\eta(\xi) := \left\{
\begin{array}{l}
1, \, |\xi| \leq 1 \\
\frac{1}{|\xi|^{1-s}}, \, |\xi| \geq  2 :
\end{array}
\right.
\end{array}
\right.
\nonumber
\end{equation}
this is the $I$-method (see e.g \cite{almckstt}), inspired from the \textit{Fourier truncation method} (see \cite{bourgbook}).
This mollified energy is finite if the data is in $H^{s}(\mathbb{T}^{2}) \times H^{s-1} (\mathbb{T}^{2})$. Furthermore it has a slow variation.
So we can control the $H^{s}(\mathbb{T}^{2}) \times H^{s-1}(\mathbb{T}^{2})$ norm of the solution on a time interval by estimating the size of the mollified
energy on the same interval. Assuming towards a contradiction that one of the maximal times of existence is finite, we control the variation
of the mollified energy in an open neighborhood of it, which yields a contradiction. It is well-known that the better we estimate this variation,
the rougher the data is for which the solution exists globally in time. The variation is first estimated on small subintervals on which some dispersive bounds
hold. Then it is estimated on the whole interval by an iteration process. Therefore the larger the subintervals are on which the dispersive bounds hold, the better the estimate of the variation is on the whole interval. In Section \ref{Section:Gwp12}, we prove that the solution exists globally for
$s > \frac{4}{9}$. In order to maximize the size of the subintervals for which the dispersive bounds hold, we adopt the following strategy:

\begin{itemize}

\item we first decompose the solution into its low frequency part and its high frequency part

\item then we plug this decomposition into the nonlinearity;

\item then we use three types of bounds (the potential bound, the kinetic bounds, and the dispersive bounds) and we estimate the nonlinearity
by using the following scheme: for the low frequency part, we mostly interpolate with the potential bound, stronger than the kinetic bounds; for the high frequency part, we mostly use the dispersive bounds, using to our advantage the regularity the solution:

\end{itemize}
see Subsection \ref{Subsec:LocalBd} for more details. In Section \ref{Section:Gwpbelow12}, we prove that the solution exists globally for $s > \frac{2}{5}$. The proof uses an adapted-linear decomposition \cite{troy}, based upon the fact that the nonlinear part of the solution is smoother than $H^{s}$
(a property observed in \cite{bourgbook}). A key element of the proof is to maximize the size of the subintervals for which this gain of regularity
holds. Again we use the same strategy to achieve this goal. The use of the potential bound in Section \ref{Section:Gwp12} and in Section \ref{Section:Gwpbelow12} is crucial: we shall explain it in Section \ref{Sec:Bounds} and in the references given in this section.

\begin{rem}
In \cite{bourgbook}, Bourgain proved the global existence of the solutions of (\ref{Eqn:DefocCubWave}) with data in $H^{s} (\mathbb{T}^{2})
\times H^{s-1}(\mathbb{T}^{2})$, $s \geq \frac{1}{2}$, by controlling the variation of

\begin{equation}
\tilde{E}(\tilde{v}) := \frac{1}{2} \int_{\mathbb{T}^{2}} |D \tilde{v}(t,x)|^{2} \, dx
+ \frac{1}{4} \int_{\mathbb{T}^{2}} | \Re(\tilde{v} (t,x))|^{4} \, dx,
\end{equation}
with $\tilde{v}(t) :=  v(t) - e^{-i t D} P_{> N} v(0)$, $v(t) := u(t) + i \frac{\partial_{t} u(t)}{D}$, and $N \gg 1$ parameter to be chosen. This fact, combined with a nonsqueezing property of balls in cylinders of smaller width, allowed him to prove a weak turbulence property
for solutions with data in the symplectic space $H^{\frac{1}{2}} (\mathbb{T}^{2}) \times H^{-\frac{1}{2}}(\mathbb{T}^{2})$: see p 96. The variation is estimated by means of an $L_{t}^{4} L_{x}^{4} ([0,1])$ bound of the solutions of the linear wave equation with data of which the Fourier modes are localized
\footnote{Hence the Fourier modes of the solutions of the linear wave equation are localized for all time}. We do not expect this bound to hold for the solutions of the linear wave equation on more general compact manifolds $\mathcal{M}$ since the linear flow instantly kills the localization. It is worth trying to apply the techniques in this paper to prove the global existence of solutions with data in $H^{\frac{1}{2}} (\mathcal{M}) \times H^{-\frac{1}{2}} (\mathcal{M})$, which would show that the weak turbulence property also holds on $\mathcal{M}$.
\end{rem}

\section{Notation}
\label{Sec:Notation}
Recall that if f is smooth then we have the inversion formula

\begin{equation}
f(x) = \sum_{n \in \mathbb{Z}^{2}} \hat{f}(n) e^{i 2 \pi  n \cdot x}
\nonumber
\end{equation}
Here the $n$\,s denote the \textit{modes}. \\

Let $J:=[a,b]$ be an interval. We say that $u := \Box^{-1}F$ on $J$ if $u$ satisfies on $J$

\begin{equation}
\partial_{tt} u - \triangle u = F \cdot
\nonumber
\end{equation}
We decompose $u$ into its linear part $u_{l}^{J}$ and its nonlinear part $u_{nl}^{J}$ starting from $a$, i.e
$u(t) = u_l^{J}(t) + u_{nl}^J (t)$ with

\begin{equation}
\begin{array}{l}
u_{l}^{J}(t) = \cos{((t-a) D)} u(a) + \frac{\sin{((t-a)D)}}{D} \partial_{t} u(a), \; \text{and}  \\
\\
u_{nl}^{J}(t) = - \int_{a}^{t} \frac{\sin{((t-t')D)}}{D} ( u^{3}(t') ) \, dt' \cdot
\end{array}
\nonumber
\end{equation}

\vspace{5mm}

Now we recall the Paley-Littlewood decomposition.\\
Let $\phi(\xi)$ be a real, radial, nonincreasing function that is equal to $1$ on the unit ball  $\{ \xi \in \mathbb{R}^{2}: |\xi| \leq 1 \}$
and that is supported on $\{ \xi \in \mathbb{R}^{2}: | \xi | \leq 2 \}$. Let $\psi$ denote the function

\begin{equation}
\begin{array}{ll}
\psi(\xi) & := \phi(\xi) - \phi(2 \xi).
\end{array}
\nonumber
\end{equation}
Let $2^{\mathbb{N}}:= \left\{ 2^{k}, \; k \in \mathbb{N} \right\}$. We define the Paley-Littlewood pieces in the following fashion:

\begin{equation}
\begin{array}{ll}
\widehat{P_{M} f}(\xi) & := \psi \left( \frac{\xi}{M} \right) \hat{f}(\xi) \; if \; M \in 2^{\mathbb{N}}, \\
\widehat{P_{0} f}(\xi) & :=  \phi (\xi) \hat{f}(\xi), \\
\widehat{P_{\leq M} f}(\xi) & := \phi \left( \frac{\xi}{M} \right) \hat{f}(\xi),   \\
\widehat{P_{> M} f} (\xi) & := \widehat{f}(\xi) - \widehat{P_{ \leq M} f}(\xi) \; \text{and} \\
P_{ M_2 < \cdot < M_1} f & := P_{< M_1} f - P_{ \leq M_2} f \; if \; (M_1,M_2) \in (2^{\mathbb{N}})^{2} \cdot
\end{array}
\nonumber
\end{equation}
Recall that

\begin{equation}
\begin{array}{ll}
f & = P_{0} f + \sum_{M \in 2^{\mathbb{N}}} P_{M} f \\
P_{\leq M} f & = P_{0} f  + \sum_{Q \in 2^{\mathbb{N}}: Q \leq M} P_{Q} f, \; \text{and} \\
P_{> M} f &  = \sum_{Q \in 2^{\mathbb{N}}: Q > M} P_{Q} f \cdot
\end{array}
\label{Eqn:DecompPaley}
\end{equation}
We also define the \textit{low frequency part} ( resp. the \textit{high frequency part} )
$P_{\lesssim N} f$  ( resp. $P_{\gtrsim N} f$ ):

\begin{equation}
\begin{array}{l}
P_{\lesssim N} f := P_{\leq 128N} f, \\
P_{\gtrsim N} f := P_{ > 128 N} f,
\end{array}
\nonumber
\end{equation}
so that

\begin{equation}
f = P_{\lesssim N} f + P_{\gtrsim N} f \cdot
\nonumber
\end{equation}
Given $ 1 \leq p \leq \infty$ and $M \in 2^{\mathbb{N}} \cup \{ 0 \}$, we recall the following inequality

\begin{equation}
\| P_{M} f \|_{L^{p}(\mathbb{T}^{2})} \,  \lesssim  \| f \|_{L^{p}(\mathbb{T}^{2})} \cdot
\nonumber
\end{equation}
Given $(m,q) \in  \mathbb{R} \times [p, \infty]$, we recall the Bernstein-type inequalities \footnote{More generally
the first estimate holds for a symbol that behaves like $ \langle \xi \rangle^{m}$ or even better, in the sense that it and its derivatives
have similar or better bounds on $ \langle \xi \rangle \sim \langle M \rangle$. Notation: $P_{\leq 0} := P_{0}$. }

\begin{equation}
\begin{array}{ll}
\| P_{M} \langle D \rangle^{m} f \|_{L^{p}(\mathbb{T}^{2})} \sim \langle M \rangle^{m} \| P_{M} f \|_{L^{p}(\mathbb{T}^{2})}, \; \text{and}  \\
\| P_{\leq M} f \|_{L^{q}(\mathbb{T}^{2})} \lesssim \langle M \rangle^{2 \left( \frac{1}{p} - \frac{1}{q} \right)} \| P_{\leq M} f \|_{L^{p}(\mathbb{T}^{2})} \cdot
\end{array}
\nonumber
\end{equation}
We also define $\tilde{\phi}(\xi)$ to be a smooth, real, radial, nonincreasing function that is equal to $1$ on $\{ \xi \in \mathbb{R}^{2}: |\xi| \leq 2 \}$
and that is supported on $\{ \xi \in \mathbb{R}^{2}: | \xi | \leq 4 \}$. Let $\tilde{\psi}(\xi)$ be a smooth, real, radial, nonincreasing function that is
equal to $1$ on $\{ \xi \in \mathbb{R}^{2}:  \frac{1}{2} \leq |\xi| \leq 2 \}$ and that is supported on $\{ \xi \in \mathbb{R}^{2}: \frac{1}{4} \leq
|\xi| \leq 4\}$. \\
\\
Next we recall the Strichartz estimates (see e.g \cite{keeltao,linsog,ginebvelo}) \footnote{
In these papers these estimates are proved on $\mathbb{R}^{2}$. However it is well-known that by truncating
smoothly the periodic initial data up to (say) $[-4 ,4]^{2}$, by considering the solution of (\ref{Eqn:DefocCubWave})
on $\mathbb{R}^{2}$ with these new initial data and by using finite speed of propagation, these estimates also hold
on $\mathbb{T}^{2}$. More generally these estimates hold on compact manifolds without boundary: see \cite{kap}.}
\begin{prop}
Let $J:=[a,b] \subset [0,1]$. Let $u:= \Box^{-1}F$ on $J$. Let $m \in \left\{ \frac{1}{4}, \frac{3}{8} \right\}$. Then we have \footnote{Here $\langle D \rangle$ denotes the
multiplier defined in the Fourier domain by $\widehat{ \langle D \rangle f } (\xi) :=  \langle \xi \rangle \hat{f}(\xi)$}

\begin{itemize}

\item the Strichartz estimates with no gain of derivative

\begin{align}
\| (u,  \partial_{t} \langle D \rangle ^{-1} u) \|_{L_{t}^{q} L_{x}^{r}(J)} \lesssim
\| (u_{0}, u_{1}) \|_{H^{m} (\mathbb{T}^{2}) \times H^{m-1} (\mathbb{T}^{2})}
+ \| F \|_{L_{t}^{\tilde{q}^{'}} L_{x}^{\tilde{r}^{'}} (J) }
\label{Eqn:StrichWave}
\end{align}
\item the Strichartz estimates with gain of derivative

\begin{equation}
\| ( u, \partial_{t} \langle D \rangle ^{-1} u) \|_{L_{t}^{q} L_{x}^{r}(J)} \lesssim \| (u_{0},u_{1}) \|_{H^{m}(\mathbb{T}^{2}) \times H^{m-1}(\mathbb{T}^{2})}
+ \| \langle D \rangle^{m-1} F \|_{L_{t}^{1} L_{x}^{2} (J)}
\label{Eqn:StrichWaveGain}
\end{equation}
under the conditions $(q,r) \in \mathcal{W}_{m}$, $(\tilde{q},\tilde{r}) \in \mathcal{W}'_{m}$. Here

\begin{equation}
\begin{array}{l}
\mathcal{W}_{m} := \left\{ (x,y) \in \mathcal{W}, \, \frac{1}{x} + \frac{2}{y} = 1 - m \right\}, \\
\mathcal{W}'_{m} := \left\{ (x,y) \in \mathcal{W}', \, \frac{1}{x} + \frac{2}{y} - 2 = 1 - m \right\},
\end{array}
\nonumber
\end{equation}
with

\begin{equation}
\begin{array}{l}
\mathcal{W}:= \left\{ (x,y) \in [2, \infty] \times [2, \infty], \,  (x,y) \neq (\infty,\infty), \, \frac{1}{x} + \frac{1}{2y} \leq  \frac{1}{4} \right\},
\; \text{and}
 \\
\mathcal{W}' := \left\{ (x',y'): \exists (x,y) \in \mathcal{W} \, s.t \, \frac{1}{x} + \frac{1}{x'} = 1 \, and \,
\frac{1}{y} + \frac{1}{y'} =1 \right\}.
\end{array}
\nonumber
\end{equation}

\end{itemize}

\end{prop}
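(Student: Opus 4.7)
The plan is to reduce the periodic estimates to the corresponding Euclidean Strichartz estimates via finite speed of propagation, as the footnote indicates. Both \eqref{Eqn:StrichWave} and \eqref{Eqn:StrichWaveGain} are classical for the wave equation on $\mathbb{R}^{2}$ in the admissibility range $\mathcal{W}_{m}$, $\mathcal{W}'_{m}$ (see \cite{keeltao,linsog,ginebvelo}), so the task is to transfer them to $\mathbb{T}^{2}$.

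To perform the transfer I would identify $\mathbb{T}^{2}$ with the cube $[0,1]^{2}\subset\mathbb{R}^{2}$, extend $(u(a),\partial_{t}u(a))$ and $F$ by periodicity to all of $\mathbb{R}^{2}$, and then multiply by a smooth spatial cutoff $\chi$ that equals $1$ on $[-3,3]^{2}$ and is supported in $[-4,4]^{2}$. Let $\tilde u$ denote the solution on $\mathbb{R}^{2}\times J$ of the wave equation with Cauchy data $(\chi u(a),\chi\partial_{t}u(a))$ at time $a$ and forcing $\chi F$. Standard localization lemmas for (possibly negative-order) Sobolev spaces show that the cutoff data have $H^{m}(\mathbb{R}^{2})\times H^{m-1}(\mathbb{R}^{2})$ norm controlled by a constant times the torus norm, and analogously for $\chi F$ in the mixed Lebesgue or $\langle D\rangle^{m-1}L_{t}^{1}L_{x}^{2}$ norms on the right-hand sides of \eqref{Eqn:StrichWave} and \eqref{Eqn:StrichWaveGain}.

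Since $J\subset[0,1]$, for any $(t,x)\in J\times[0,1]^{2}$ finite speed of propagation tells us $\tilde u(t,x)$ depends only on data and forcing restricted to $[-1,2]^{2}$, where $\chi\equiv 1$ and they coincide with the periodic extensions of the torus objects. Hence $\tilde u(t,x)=u(t,x)$ on $J\times[0,1]^{2}$, and periodicity of $u$ in space gives
\begin{equation}
\|(u,\partial_{t}\langle D\rangle^{-1}u)\|_{L_{t}^{q}L_{x}^{r}(J\times\mathbb{T}^{2})} \lesssim \|(\tilde u,\partial_{t}\langle D\rangle^{-1}\tilde u)\|_{L_{t}^{q}L_{x}^{r}(J\times\mathbb{R}^{2})}.
\nonumber
\end{equation}
Applying the $\mathbb{R}^{2}$ versions of \eqref{Eqn:StrichWave} and \eqref{Eqn:StrichWaveGain} to $\tilde u$ and estimating the Euclidean norms of the cutoff data and forcing by the torus norms closes the argument.

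The only mildly technical point is that $\langle D\rangle^{m-1}$ does not commute with $\chi$ in \eqref{Eqn:StrichWaveGain}: one must control $\|\langle D\rangle^{m-1}(\chi F)\|_{L^{1}_{t}L^{2}_{x}(\mathbb{R}^{2})}$ by $\|\langle D\rangle^{m-1}F\|_{L^{1}_{t}L^{2}_{x}(\mathbb{T}^{2})}$. For $m\in\{1/4,3/8\}$ the multiplier $\langle D\rangle^{m-1}$ is of negative order, so this reduces to a standard pseudodifferential/commutator estimate against the smooth compactly supported $\chi$. No genuine obstacle remains, and the admissibility conditions on $(q,r)$ and $(\tilde q,\tilde r)$ transfer unchanged because they are identical to the sharp wave Strichartz admissibility conditions on $\mathbb{R}^{2}$ at regularity $m$ in two space dimensions.
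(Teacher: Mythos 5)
Your argument is the same as the paper's, which is given only as a footnote sketch: truncate the periodic data smoothly to a box, use finite speed of propagation, and invoke the Euclidean Strichartz estimates of \cite{keeltao,linsog,ginebvelo} (the footnote also points to \cite{kap} for general compact manifolds without boundary). Your expansion of that sketch --- the spatial cutoff, the comparison of fractional and negative-order Sobolev norms of the truncated data, and the $\langle D\rangle^{m-1}$--$\chi$ commutator on the forcing --- is the right bookkeeping.

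One step is stated too quickly, and it is of exactly the same nature as the commutator issue you did catch: the inequality comparing $\|\partial_t\langle D\rangle^{-1}u\|_{L^q_t L^r_x(J\times\mathbb{T}^2)}$ with $\|\partial_t\langle D\rangle^{-1}\tilde u\|_{L^q_t L^r_x(J\times\mathbb{R}^2)}$ does not follow from the pointwise identity $u=\tilde u$ on $J\times[0,1]^2$ alone. The operator $\langle D\rangle^{-1}$ on the torus (built from the discrete Fourier transform) and $\langle D\rangle^{-1}$ on $\mathbb{R}^2$ are distinct nonlocal operators, and $\tilde u$ differs from the periodic extension of $u$ outside a bounded region, so $\langle D\rangle^{-1}_{\mathbb{T}^2}u$ and $\langle D\rangle^{-1}_{\mathbb{R}^2}\tilde u$ need not agree on $[0,1]^2$. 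The standard remedy is to apply $\langle D\rangle^{-1}_{\mathbb{T}^2}$ at the level of the torus equation \emph{before} truncating: set $w:=\langle D\rangle^{-1}_{\mathbb{T}^2}u$, which solves the torus wave equation with data $(\langle D\rangle^{-1}_{\mathbb{T}^2}u(a),\,\langle D\rangle^{-1}_{\mathbb{T}^2}\partial_t u(a))$ and forcing $\langle D\rangle^{-1}_{\mathbb{T}^2}F$; truncate these by $\chi$; run your finite-speed argument for the resulting Euclidean solution $\tilde w$, obtaining $\partial_t\tilde w=\partial_t w=\partial_t\langle D\rangle^{-1}_{\mathbb{T}^2}u$ on $J\times[0,1]^2$; and then bound the Euclidean norms of the truncated data by the corresponding torus norms using the same localization and commutator estimates you already invoke. (Alternatively, one can show the discrepancy $\tilde u-(\text{periodic extension of }u)$, supported away from the fundamental domain, contributes a negligible error after applying $\langle D\rangle^{-1}_{\mathbb{R}^2}$, since its kernel decays rapidly.) With this adjustment your proof is complete and faithful to the paper's intended argument.
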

Given $w$ a space-time function we define $Z_{m,s}(J,w)$ to be the following number

\begin{equation}
Z_{m,s}(J,w) :=
\left\| \left( \langle D \rangle^{1-m} I w  , \partial_{t}  \langle D \rangle^{-m} I w \right) \right\|_{L_{t}^{\frac{3}{m}} L_{x}^{\frac{6}{3-4m}}(J)} \cdot
\nonumber
\end{equation}
Observe that $\left( \frac{3}{m}, \frac{6}{3-4m} \right) \in \mathcal{W}_{m}$.

\section{General strategy}
\label{Sec:Strategy}

In this section, we explain the general strategy to prove global existence for data
$(u_{0},u_{1}) \in H^{s}(\mathbb{T}^{2}) \times H^{s-1} (\mathbb{T}^{2})$, $s<1$. \\
Assume towards a contradiction that $T_{+} < \infty$. Then we necessarily have

\begin{equation}
\begin{array}{l}
\lim_{t \rightarrow T_{+}^{-}} \left\| ( u(t),\partial_{t} u(t) ) \right\|_{H^{s} (\mathbb{T}^{2}) \times H^{s-1} (\mathbb{T}^{2})}  = \infty.
\end{array}
\label{Eqn:Hsblowupcrit}
\end{equation}
Our goal is then to prove that for some $\epsilon < 1$ fixed and small enough

\begin{equation}
\begin{array}{l}
\sup_{t \in [T_{+} - \epsilon, T_{+})} \| ( u(t), \partial_{t} u(t) ) \|_{H^{s} (\mathbb{T}^{2}) \times H^{s-1} (\mathbb{T}^{2})} < \infty,
\end{array}
\nonumber
\end{equation}
which contradicts (\ref{Eqn:Hsblowupcrit}). A similar argument shows that $T_{-} = - \infty$. \\
Translating in time if necessary we may assume that $T_{+} = \epsilon$; therefore we are reduced to show that

\begin{equation}
\sup_{t \in [0, \epsilon)} \| ( u(t), \partial_{t} u(t) ) \|_{H^{s} (\mathbb{T}^{2}) \times H^{s-1} (\mathbb{T}^{2})} < \infty \cdot
\label{Eqn:NrjBdImx}
\end{equation}
We first prove that there exists $C:= C \left( \| (u_{0}, u_{1}) \|_{H^{s}(\mathbb{T}^{2}) \times H^{s-1}(\mathbb{T}^{2})} \right)$ such that

\begin{equation}
E(Iu_{0}) \leq C N^{2(1-s)}:
\label{Eqn:StratEstMolNrjInit}
\end{equation}
see Subsection \ref{Section:MolNrjInit}. \\
Next we prove that for $s > s_{0}$ (with $s_{0}$ and $N \gg 1$ to be determined)

\begin{equation}
\begin{array}{ll}
\sup_{t \in [0, \epsilon)} E(Iu(t)) & \leq 8 C N^{2(1-s)} :
\end{array}
\label{Eqn:PostBdMolNrj}
\end{equation}
this yields global existence for $s > s_{0}$, in view of Proposition \ref{Prop:MolNrjfinal}. \\
Assume towards a contradiction that (\ref{Eqn:PostBdMolNrj}) does not hold, then this means that there exists $\epsilon_{0} \in [ 0, \epsilon)$
such that

\begin{equation}
\begin{array}{ll}
E(Iu(\epsilon_{0})) & = 4 C N^{2(1-s)},
\end{array}
\label{Eqn:MolNrjEpsilonZero}
\end{equation}
and

\begin{equation}
\begin{array}{ll}
\sup_{t \in [0, \epsilon_{0}]} E(Iu(t)) & \leq 4 C N^{2(1-s)}.
\end{array}
\label{Eqn:AprBdMolNrj}
\end{equation}
Our goal is then to prove that (\ref{Eqn:MolNrjEpsilonZero}) cannot hold. There are three steps:

\begin{enumerate}

\item Local boundedness: control of some norms on an interval $J \subset [0,\epsilon_{0}]$ ``small'': this is done by using
(\ref{Eqn:AprBdMolNrj}), (\ref{Eqn:StrichWave}), and (\ref{Eqn:StrichWaveGain}).
\item Local variation of $E(Iu)$: estimate on $J$ ``small'' (in a sense to be determined) of the variation of the mollified energy by using the
local bounds of these norms
\item Total variation of $E(Iu)$: estimate on $[0, \epsilon_{0}]$ of the
variation of the mollified energy by partitionning $[0,\epsilon_{0}]$ into subintervals $J$ of same size, except maybe the last one. One proves
that

\begin{equation}
\sup_{t \in [0,\epsilon_{0}]} |E(Iu(t)) - E(Iu_{0})| \leq 2 C N^{2(1-s)} \cdot
\label{Eqn:EstNrjGoal}
\end{equation}

\end{enumerate}
This contradicts (\ref{Eqn:MolNrjEpsilonZero}), in view of (\ref{Eqn:StratEstMolNrjInit}).

\section{The kinetic bounds, the potential bound and the dispersive  bounds}
\label{Sec:Bounds}

Throughout this paper we use three types of bounds:

\begin{itemize}

\item \textit{the kinetic bounds}: notice from (\ref{Eqn:AprBdMolNrj}) that the following
bound of the kinetic part of the mollified energy holds, i.e

\begin{equation}
\begin{array}{ll}
\sup_{t \in [0, \epsilon_{0}]} \left\| \left( \nabla I u(t) ,\partial_{t} I u(t) \right) \right\|_{L_{t}^{\infty} L_{x}^{2}([0,\epsilon_{0}]) \times L_{t}^{\infty} L_{x}^{2}([0,\epsilon_{0}])} & \lesssim N^{1-s}.
\end{array}
\label{Eqn:KinetBound}
\end{equation}
The kinetic bounds are the bounds of the type $L_{t}^{\infty} L_{x}^{r} (J)$ (with $ \infty > r \geq 2 $) that can be derived from
(\ref{Eqn:KinetBound}) by using universal estimates such as Sobolev inequalities, Bernstein-type inequalities, or fundamental theorem of calculus. For
example, from

\begin{equation}
\begin{array}{ll}
\sup_{t \in [0, \epsilon_{0}]} \| I u(t) \|_{L^{2}(\mathbb{T}^{2})} \lesssim \| u_{0} \|_{L^{2} (\mathbb{T}^{2})} +
\sup_{t \in [0, \epsilon_{0}]} \| \partial_{t} I u(t) \|_{L^{2}(\mathbb{T}^{2})},
\end{array}
\label{Eqn:KinetMassBd0}
\end{equation}
we see that

\begin{equation}
\begin{array}{ll}
\left\| \left( \langle D \rangle I u ,\partial_{t} I u \right) \right\|_{L_{t}^{\infty} L_{x}^{2}([0,\epsilon_{0}]) \times L_{t}^{\infty} L_{x}^{2}([0,\epsilon_{0}])} & \lesssim N^{1-s}.
\end{array}
\label{Eqn:KinetMassBd1}
\end{equation}
We also have

\begin{equation}
\begin{array}{ll}
\left\| \langle D \rangle^{1-m} I u \right\|_{L_{t}^{\infty} L_{x}^{\frac{2}{1-m}}([0, \epsilon_{0}])} & \lesssim
\| \langle D \rangle I u \|_{L_{t}^{\infty} L_{x}^{2}([0, \epsilon_{0}])}:
\end{array}
\label{Eqn:KinetMassBd2}
\end{equation}
Hence (\ref{Eqn:KinetMassBd0}), (\ref{Eqn:KinetMassBd1}), and (\ref{Eqn:KinetMassBd2}) are kinetic bounds.

\item \textit{the potential bound}: we see from (\ref{Eqn:AprBdMolNrj}) that the following potential bound holds:

\begin{equation}
\| I u \|_{L_{t}^{\infty} L_{x}^{4} ([0,\epsilon_{0}])} \lesssim N^{\frac{1-s}{2}}.
\label{Eqn:BoundPot}
\end{equation}
Notice that this bound is stronger than the one that would be found by using only (\ref{Eqn:KinetBound}). Indeed if we were to
bound $ \| I u \|_{L_{t}^{\infty} L_{x}^{4} ([0,\epsilon_{0}])} $ by using only (\ref{Eqn:KinetBound}), we would find
\begin{equation}
\begin{array}{l}
\| I u \|_{L_{t}^{\infty} L_{x}^{4} ([0,\epsilon_{0}])}  \lesssim \| \langle D \rangle I u \|_{L_{t}^{\infty} L_{x}^{2}([0,\epsilon_{0}])}
\lesssim N^{1-s}:
\end{array}
\label{Eqn:BoundPotNrj}
\end{equation}
this is a worse estimate than (\ref{Eqn:BoundPot}). Therefore this estimate is useful (in fact crucial), in particular
when we deal with multilinear expressions where the low frequency part of $u$ appears : see Subsection \ref{Subsec:LocalBd} (resp. Subsection \ref{Subsec:EstLinNonlin}) and in particular Remark \ref{Rem:LowNNN} (resp. Remark \ref{Rem:InterpAgain}).

\item \textit{the dispersive bounds}: the bounds of the type $L_{t}^{q} L_{x}^{r}$ (with $q < \infty$ ) that are derived from
the Strichartz estimates (\ref{Eqn:StrichWave}) and (\ref{Eqn:StrichWaveGain}), and (\ref{Eqn:KinetBound}). We shall see in Proposition \ref{Prop:LocalBoundedness} that if $J \subset [0, \epsilon_{0}]$ is small enough then

\begin{equation}
\begin{array}{ll}
Z_{m,s}(J,u) & \lesssim N^{1-s},
\end{array}
\nonumber
\end{equation}
by using (\ref{Eqn:StrichWave}) and (\ref{Eqn:KinetBound}). Hence $Z_{m,s}(J,u)$ is a dispersive bound of $u$. We shall use dispersive bounds when we deal with multilinear expressions where the high frequency part of a function $f$ appears: see Subsection \ref{Section:Gwp12} and in particular Remark \ref{Rem:LowNNHighN} and Remark \ref{rem:Y1diff}.

\end{itemize}

\section{Preliminaries}
In this section we prove some preliminary results.

\subsection{Proposition: estimate of mollified energy at time $0$ }
\label{Section:MolNrjInit}
In the following proposition, we estimate the mollified energy at time $0$. We have

\begin{prop}
We have (with $C:= C(\|(u_{0},u_{1}) \|_{H^{s}(\mathbb{T}^{2}) \times H^{s-1}(\mathbb{T}^{2})}) \gg 1$)
\begin{align}
E(Iu_{0}) \leq C N^{2(1-s)} \cdot
\label{Eqn:EstMolNrjInit}
\end{align}
\label{Prop:EstMolNrjInit}
\end{prop}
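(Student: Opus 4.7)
The plan is to bound each of the three summands in
\[
E(Iu_0) = \tfrac{1}{2}\|I u_1\|_{L^2(\mathbb{T}^2)}^2 + \tfrac{1}{2}\|\nabla I u_0\|_{L^2(\mathbb{T}^2)}^2 + \tfrac{1}{4}\|I u_0\|_{L^4(\mathbb{T}^2)}^4
\]
separately by a constant times $N^{2(1-s)}$, using only Plancherel, a pointwise bound on the multiplier $m$, and one Sobolev-type inequality on $\mathbb{T}^2$. The pointwise bound I would verify first is
\[
m(\xi) \,\lesssim\, N^{1-s}\,\langle \xi \rangle^{s-1} \qquad (\xi \in \mathbb{Z}^2),
\]
which is proved directly by case analysis: for $|\xi| \le N$ one has $m(\xi) \le 1$ and $\langle \xi \rangle \lesssim N$, so $N^{1-s}\langle \xi\rangle^{s-1} \gtrsim N^{1-s}\cdot N^{s-1}=1 \ge m(\xi)$; for $|\xi| > N$ one has $m(\xi) = (N/|\xi|)^{1-s}$, which is exactly $N^{1-s}|\xi|^{s-1}$.

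By Plancherel this pointwise estimate immediately yields
\[
\|\langle D \rangle I u_0\|_{L^2}^2 \,\lesssim\, N^{2(1-s)}\|u_0\|_{H^s}^2, \qquad \|I u_1\|_{L^2}^2 \,\lesssim\, N^{2(1-s)}\|u_1\|_{H^{s-1}}^2,
\]
which handles both kinetic pieces of $E(Iu_0)$ (since $\|\nabla I u_0\|_{L^2} \le \|\langle D \rangle I u_0\|_{L^2}$).

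The potential term is where care is needed. Estimating $\|I u_0\|_{L^4}$ through the Sobolev embedding $H^1(\mathbb{T}^2) \hookrightarrow L^4(\mathbb{T}^2)$ and applying the kinetic bound above would only give $\|I u_0\|_{L^4}^4 \lesssim N^{4(1-s)}$, losing a factor of $N^{2(1-s)}$; this is precisely the loss flagged in the comparison between (\ref{Eqn:BoundPot}) and (\ref{Eqn:BoundPotNrj}). I would therefore interpolate via the Gagliardo--Nirenberg inequality on the torus
\[
\|f\|_{L^4(\mathbb{T}^2)}^2 \,\lesssim\, \|f\|_{L^2(\mathbb{T}^2)}\,\|\langle D\rangle f\|_{L^2(\mathbb{T}^2)},
\]
applied to $f = I u_0$. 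Since $|m| \le 1$ we have $\|I u_0\|_{L^2} \le \|u_0\|_{L^2} \le \|u_0\|_{H^s}$; combined with the kinetic bound above this gives $\|I u_0\|_{L^4}^4 \lesssim N^{2(1-s)}\|u_0\|_{H^s}^4$.

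Summing the three bounds and absorbing all implicit constants into $C = C(\|(u_0,u_1)\|_{H^s \times H^{s-1}})$ yields (\ref{Eqn:EstMolNrjInit}). The only non-mechanical step, and thus the main obstacle, is the interpolation in the $L^4$ piece: everything else is a routine application of Plancherel together with the multiplier bound on $m$, whereas Gagliardo--Nirenberg is what is ultimately responsible for the correct scaling $N^{2(1-s)}$ in place of the wasteful $N^{4(1-s)}$.
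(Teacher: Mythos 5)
Your proof is correct, and the treatment of the two kinetic pieces is the same as the paper's (both amount to the pointwise bound $m(\xi)\lesssim N^{1-s}\langle\xi\rangle^{s-1}$ combined with Plancherel; the paper just splits into $|n|\lesssim N$ and $|n|\gtrsim N$ rather than writing the multiplier bound uniformly). The difference is in the potential term. You use the Ladyzhenskaya/Gagliardo--Nirenberg inequality $\|f\|_{L^4(\mathbb{T}^2)}^2\lesssim\|f\|_{L^2}\|\langle D\rangle f\|_{L^2}$ and then feed in $\|Iu_0\|_{L^2}\lesssim\|u_0\|_{H^s}$ together with the kinetic bound $\|\langle D\rangle Iu_0\|_{L^2}\lesssim N^{1-s}$, producing $\|Iu_0\|_{L^4}^4\lesssim N^{2(1-s)}$ (your remark that a naive Sobolev embedding would give the wasteful $N^{4(1-s)}$ is accurate). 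The paper instead decomposes $Iu_0 = P_{\lesssim N}Iu_0 + P_{\gtrsim N}Iu_0$ and applies Bernstein dyadically on each piece, which yields the slightly sharper $\|Iu_0\|_{L^4}^4\lesssim\max(N^{2-4s},1)$; both are dominated by $N^{2(1-s)}$, which is all one needs since the kinetic pieces already cost that much, so the two routes are interchangeable here. Your interpolation argument is a touch cleaner and thematically matches the ``interpolate with the potential bound'' philosophy of the paper, while the Bernstein route reuses machinery the paper has already set up; either is fine for this proposition. One minor point worth making explicit: $\|u_0\|_{L^2}\le\|u_0\|_{H^s}$ and the low-frequency half of your multiplier bound both use $s\ge 0$, which holds throughout the paper's range $s>2/5$.
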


\begin{proof}

From Plancherel theorem

\begin{align*}
\| \nabla I u_{0} \|_{L^{2} (\mathbb{T}^{2})}^{2} & \lesssim \sum_{ n \in \mathbb{Z}^{2}: |n| \lesssim N}  |n|^{2} | \widehat{u_{0}}(n) |^{2}
+ \sum_{ n \in \mathbb{Z}^{2}: |n| \gtrsim N} \frac{N^{2(1-s)}}{|n|^{2(1-s)}} |n|^{2}  | \widehat{u_{0}}(n) |^{2}  \\
& \lesssim N^{2(1-s)} \cdot
\end{align*}
Similarly

\begin{align*}
\| I u_{1} \|_{L^{2} (\mathbb{T}^{2})}^{2} \lesssim N^{2(1-s)} \cdot
\end{align*}
From Bernstein-type inequalities we easily get

\begin{align*}
\| I u_{0} \|^{4}_{L^{4} (\mathbb{T}^{2})} & \lesssim   \| P_{\lesssim N} I u_{0} \|^{4}_{L^{4} (\mathbb{T}^{2})} +
\| P_{\gtrsim N} I u_{0} \|^{4}_{L^{4} (\mathbb{T}^{2}) }  \nonumber \\
& \lesssim \max{(N^{2-4s},1)} \nonumber \\
& \lesssim N^{2(1-s)} \cdot
\end{align*}

\end{proof}

\subsection{Proposition: estimate of Sobolev norms}
\label{Section:MolNrjFinal}

In the following proposition, we prove that a bound of the mollified energy automatically yields a bound of
the Sobolev norms of the solution. We have

\begin{prop}
Let $ T \in [0,\epsilon)$. Then

\begin{align}
\left\| (u(T), \partial_{t} u(T)) \right\|^{2}_{H^{s}(\mathbb{T}^{2}) \times H^{s-1}(\mathbb{T}^{2})}  \lesssim
\| (u_{0},u_{1}) \|^{2}_{H^{s}(\mathbb{T}^{2}) \times H^{s-1}(\mathbb{T}^{2})} + \sup_{t \in [0,T]} E(Iu(t))
\end{align}
\label{Prop:MolNrjfinal}
\end{prop}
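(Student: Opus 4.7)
The plan is to compare $H^s$ norms of $(u,\partial_t u)$ with the $H^1 \times L^2$ norms of $(Iu, \partial_t Iu)$, which are controlled by the mollified energy, using the fact that the multiplier $m$ is $\equiv 1$ on low frequencies and $\sim N^{1-s}/|\xi|^{1-s}$ on high frequencies.

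The first step is to establish the pointwise Fourier inequality (valid for $s\le 1$ and $N\ge 1$)
\begin{equation*}
\langle \xi\rangle^{s}|\hat f(\xi)| \;\lesssim\; \langle \xi\rangle\,|\widehat{If}(\xi)|,
\qquad
\langle \xi\rangle^{s-1}|\hat g(\xi)| \;\lesssim\; |\widehat{Ig}(\xi)|,
\end{equation*}
by splitting the argument into the low-frequency regime $|\xi|\lesssim N$ (where $m(\xi)\sim 1$, so $\hat f\sim \widehat{If}$, and $\langle\xi\rangle^s\le\langle\xi\rangle$ since $\langle\xi\rangle\ge 1$) and the high-frequency regime $|\xi|\gtrsim N$ (where $1/m(\xi)\sim |\xi|^{1-s}/N^{1-s}$, so the loss is absorbed by $N^{1-s}\ge 1$). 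Summing in $\xi$ yields
\begin{equation*}
\|f\|_{H^{s}(\mathbb{T}^2)}\;\lesssim\;\|If\|_{H^{1}(\mathbb{T}^2)},\qquad
\|g\|_{H^{s-1}(\mathbb{T}^2)}\;\lesssim\;\|Ig\|_{L^{2}(\mathbb{T}^2)}.
\end{equation*}

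Applying these with $f=u(T)$ and $g=\partial_t u(T)$, and recalling that $I$ commutes with $\partial_t$, I am reduced to bounding $\|Iu(T)\|_{H^1}^2+\|\partial_t Iu(T)\|_{L^2}^2$ by the right-hand side. The gradient and time-derivative parts are immediate from the definition of the mollified energy: $\|\nabla Iu(T)\|_{L^{2}}^{2}+\|\partial_t Iu(T)\|_{L^{2}}^{2}\le 2E(Iu(T))\le 2\sup_{t\in[0,T]}E(Iu(t))$.

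The only genuinely non-trivial piece is $\|Iu(T)\|_{L^{2}}$, which the kinetic part of $E(Iu)$ does not see. Here I would write, via the fundamental theorem of calculus,
\begin{equation*}
\|Iu(T)\|_{L^{2}}\;\le\;\|Iu_{0}\|_{L^{2}}+\int_{0}^{T}\|\partial_{t}Iu(s)\|_{L^{2}}\,ds
\;\le\;\|u_{0}\|_{L^{2}}+T\sup_{t\in[0,T]}\bigl(2E(Iu(t))\bigr)^{1/2},
\end{equation*}
where I used $\|Iu_{0}\|_{L^{2}}\le \|u_{0}\|_{L^{2}}\le \|u_{0}\|_{H^{s}}$ because $0\le m\le 1$, and the fact that $T<\epsilon<1$. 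Squaring and combining with the previous paragraph gives the claim. The only step where I had to be careful was this zero-/low-frequency control of $Iu$ itself (since $\|\nabla Iu\|_{L^2}$ alone misses the zero mode), and it is handled by the one-line fundamental theorem of calculus estimate above, which does not lose any powers of $N$ thanks to the restriction $T<1$.
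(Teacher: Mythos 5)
Your proof is correct and follows essentially the same route as the paper's: you compare $H^{s}\times H^{s-1}$ norms with $H^{1}\times L^{2}$ norms of $(Iu,\partial_{t}Iu)$ via a low/high frequency comparison of the multiplier $m$, bound the gradient and time-derivative pieces directly by the mollified energy, and handle the missing zero-mode contribution $\|Iu(T)\|_{L^{2}}$ by the fundamental theorem of calculus exactly as the paper does (the paper phrases this as bounding $\|P_{0}u(T)\|_{H^{s}}$). No gaps.
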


\begin{proof}

From the fundamental theorem of calculus

\begin{align*}
\| P_{0} u(T) \|_{H^{s} (\mathbb{T}^{2})} \lesssim \| I u(T) \|_{L^{2} (\mathbb{T}^{2})}
\lesssim \| u_0 \|_{L^{2}(\mathbb{T}^{2})} + \sup_{t \in [0,T]} \| \partial_{t} I u (t) \|_{L^{2} (\mathbb{T}^{2})}
\end{align*}
From Plancherel theorem

\begin{align*}
\| P_{> 0} u(T) \|^{2}_{H^{s}(\mathbb{T}^{2})} & \lesssim  \| P_{ 0 < \cdot \lesssim N} u(T) \|^{2}_{H^{s} (\mathbb{T}^{2})} +
\| P_{ \gtrsim N} u(T)  \|^{2}_{H^{s}(\mathbb{T}^{2})} \\
& \lesssim \sum_{n \in \mathbb{Z}^{2}: |n| \lesssim N} |n|^{2} |\hat{u}(T,n)|^{2}  +
\sum_{n \in \mathbb{Z}^{2}: |n| \gtrsim N} |n|^{2} \frac{N^{2(1-s)}}{|n|^{2(1-s)}} | \hat{u}(T,n)|^{2}   \\
& \lesssim \| \nabla I u(T) \|^{2}_{L^{2}(\mathbb{T}^{2})} \cdot
\end{align*}
Dividing into modes $|n| \lesssim N$ and $|n| \gtrsim N$ we get in a similar fashion

\begin{align*}
\| \partial_{t} u(T) \|_{H^{s-1}(\mathbb{T}^{2})}^{2} \lesssim
\| \partial_{t} I u(T) \|^{2}_{L^{2} (\mathbb{T}^{2})} \cdot
\end{align*}

\end{proof}









\section{Global existence for $s> \frac{4}{9}$}
\label{Section:Gwp12}

In this section, we prove the global existence of a solution $u$ of (\ref{Eqn:DefocCubWave}) with data in $H^{s}(\mathbb{T}^{2})
\times H^{s-1}(\mathbb{T}^{2})$, $s > \frac{4}{9}$.

\subsection{Local boundedness}
\label{Subsec:LocalBd}
In this subsection, we prove that we get some dispersive bounds of the solution on an
interval $J$ satisfying a smallness condition (namely the condition (\ref{Eqn:SizeJ})). In Subsection \ref{Subsec:LocalVarEIu}, we will use these bounds to estimate the variation of the mollified energy
on $J$. In Subsection \ref{Subsec:TotalVarEIu}, we will estimate the variation of the mollified energy on the whole
interval $[0,\epsilon_{0}]$ by partionning it into intervals $J$ satisfying (\ref{Eqn:SizeJ}), and an iteration process. \\
Our goal is to prove these bounds on an interval $J$ as large as possible: this allows to reduce the number of intervals
$J$ covering $[0,\epsilon_{0}]$, which eventually yields a better estimate of the variation of the mollified energy on $[0, \epsilon_{0}]$ and
therefore prove the global existence for rougher data. \\
We have the following:

\begin{prop}
Let $J:=[a,b] \subset [0,\epsilon_{0}]$. There exists $ 0 < c \ll 1$ such that if
\begin{align}
|J| \leq c N^{s-1},
\label{Eqn:SizeJ}
\end{align}
then

\begin{equation}
\begin{array}{l}
m \in \left\{ \frac{1}{4}, \frac{3}{8} \right\}: Z_{m,s}(J,u) \lesssim N^{1-s}.
\end{array}
\label{Eqn:EstZms}
\end{equation}

\label{Prop:LocalBoundedness}
\end{prop}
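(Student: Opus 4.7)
\medskip

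\noindent\textbf{Proof proposal.}

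The plan is a standard $I$-method bootstrap. Set $v := \langle D\rangle^{1-m} I u$. Since $u$ solves (\ref{Eqn:DefocCubWave}) on $J$,
\[
\Box v \;=\; -\langle D\rangle^{1-m} I(u^{3}),
\]
and the exponent pair $(3/m,\, 6/(3-4m))$ defining $Z_{m,s}$ lies in $\mathcal{W}_m$. Apply the no-gain Strichartz estimate (\ref{Eqn:StrichWave}) to $v$ with a judiciously chosen dual pair $(\tilde q,\tilde r)\in\mathcal{W}_m$ (alternatively, the gain version (\ref{Eqn:StrichWaveGain}), which reduces the nonlinearity to $\|I(u^{3})\|_{L^1_t L^2_x(J)}$). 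This yields
\[
Z_{m,s}(J,u)\;\lesssim\;\|(\langle D\rangle Iu(a),\,\partial_t Iu(a))\|_{L^2\times L^2}\;+\;\bigl\|\langle D\rangle^{1-m} I(u^{3})\bigr\|_{L^{\tilde q'}_t L^{\tilde r'}_x(J)}.
\]
The data term is $\lesssim N^{1-s}$ directly from the kinetic bound (\ref{Eqn:KinetMassBd1}), so only the nonlinear term requires work.

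To estimate the nonlinearity, I decompose $u=u_{low}+u_{high}$ with $u_{low}:=P_{\lesssim N}u$, $u_{high}:=P_{\gtrsim N}u$, and expand $u^{3}$ into the four types of terms LLL, LLH, LHH, HHH. Following the general philosophy explained after the statement of the main theorem, each factor of $u_{low}$ is controlled via the potential bound $\|u_{low}\|_{L^\infty_t L^4_x(J)}\lesssim N^{(1-s)/2}$ coming from (\ref{Eqn:BoundPot}) (interpolated with the kinetic bounds (\ref{Eqn:KinetMassBd1})--(\ref{Eqn:KinetMassBd2}) when a different Lebesgue exponent is needed), while each factor of $u_{high}$ is controlled via a dispersive bound: using $m(\xi)\sim (N/|\xi|)^{1-s}$ on $|\xi|\gtrsim N$, one has $u_{high}\sim N^{s-1}\langle D\rangle^{1-s} Iu_{high}$, and since $1-s<1-m$ for the relevant $m\in\{1/4,3/8\}$ (as $s>4/9$), the operator $\langle D\rangle^{1-s}Iu_{high}$ is controlled by $\langle D\rangle^{1-m}Iu$ up to a Bernstein-type loss, hence by $Z_{m,s}(J,u)$. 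Fractional Leibniz distributes the derivative $\langle D\rangle^{1-m}$ onto one factor; the remaining two factors are paired with the potential/kinetic tools. A H\"older inequality in time then supplies a factor $|J|^{\theta}$ with $\theta>0$.

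To close, I run a continuity/bootstrap argument. Under the standing assumption (\ref{Eqn:AprBdMolNrj}), the combined estimate will take the schematic form
\[
Z_{m,s}(J,u)\;\leq\; C_{1} N^{1-s} \;+\; C_{2}\,|J|^{\theta}\, N^{\alpha}\, Z_{m,s}(J,u)^{\beta},
\]
with $\beta\in\{0,1,2\}$ depending on which frequency interaction one is estimating, and with $\alpha$ chosen so that $\alpha+\theta(s-1)\leq 1-s$ provided $s>4/9$. The smallness hypothesis $|J|\leq cN^{s-1}$ then makes the second term absorbable (for $\beta\geq 1$, via a standard continuity argument in the length of $J$, using $Z_{m,s}(\{a\},u)=0$), producing the desired bound $Z_{m,s}(J,u)\lesssim N^{1-s}$ for both $m=\tfrac14$ and $m=\tfrac38$.

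The main obstacle is the bookkeeping of exponents, especially for the LLL interaction, where the potential bound is indispensable: using only kinetic bounds on $u_{low}$ would yield $N^{3(1-s)}$ instead of something like $N^{3(1-s)/2}$ times a Bernstein/interpolation factor, and the final exponent $\alpha+\theta(s-1)$ would exceed $1-s$ unless $s$ is much closer to $1$. It is precisely the gain from (\ref{Eqn:BoundPot}) vs.\ (\ref{Eqn:BoundPotNrj}) on the LLL and LLH terms, together with using dispersive $Z_{m,s}$ control (at both $m=\tfrac14$ and $m=\tfrac38$) for $u_{high}$, that pushes the threshold down to $s>4/9$; getting each interaction balanced is the delicate part.
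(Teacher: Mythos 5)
Your proposal matches the paper's proof in all essentials: reduce via Strichartz to estimating $\|\langle D\rangle^{1-m}I(u^3)\|$ in a dual Strichartz norm, decompose $u=P_{\lesssim N}u+P_{\gtrsim N}u$ and expand into LLL/LLH/LHH/HHH interactions, use the potential bound (\ref{Eqn:BoundPot}) and interpolation for low-frequency factors, convert high-frequency factors into $Z_{1/4,s}$ or $Z_{3/8,s}$ via the $N^{m-1}$ gain from $m(\xi)$ on $|\xi|\gtrsim N$, extract $|J|^{\theta}$ by H\"older in time, and close with a continuity argument (which, as you implicitly note, has to be run at $m=\tfrac14$ first since the LHH and HHH contributions to $Z_{3/8,s}$ involve $Z_{1/4,s}$). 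This is the paper's argument.
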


\begin{proof}

From (\ref{Eqn:StrichWave}) and (\ref{Eqn:KinetMassBd1}) we get

\begin{align*}
Z_{m,s}(J,u) & \lesssim  \left\|  \langle D \rangle I u(a),\partial_{t} I u(a)) \right\|_{L^{2} (\mathbb{T}^{2}) \times L^{2} (\mathbb{T}^{2})} +
\| \langle D \rangle^{1-m} I (P_{\lesssim N} u)^{3}  \|_{L_{t}^{1} L_{x}^{\frac{2}{2-m}} (J)} +  \\
& \| \langle D \rangle^{1-m} I (P_{\lesssim N} u P_{\lesssim N} u P_{ \gtrsim N} u) \|_{L_{t}^{1} L_{x}^{\frac{2}{2-m}} (J)} + \\
& \| \langle D \rangle^{1-m} I (P_{\lesssim N} u P_{ \gtrsim N} u P_{ \gtrsim N} u)  \|_{L_{t}^{\frac{3}{2+m}} L_{x}^{\frac{6}{7-4m}}(J)} + \\
& \| \langle D \rangle ^{1-m} I (P_{\gtrsim N} u)^{3}  \|_{L_{t}^{\frac{3}{2+m}} L_{x}^{\frac{6}{7-4m}}(J)} \\
& \lesssim N^{1-s} + Y_{1} + Y_{2} + Y_{3} + Y_{4} \cdot
\end{align*}
We have

\begin{equation}
\begin{array}{ll}
Y_{1} & \lesssim  |J| \, \| \langle D \rangle^{1-m} I  u \|_{L_{t}^{\infty} L_{x}^{\frac{2}{1-m}} (J)}  \| P_{\lesssim  N} u \|^{2}_{L_{t}^{\infty} L_{x}^{4}(J)} \\
& \lesssim |J|  \| \langle D \rangle I u \|_{L_{t}^{\infty} L_{x}^{2} (J)} \| I u \|^{2}_{L_{t}^{\infty} L_{x}^{4}(J)} \\
& \lesssim |J| N^{2(1-s)},
\end{array}
\label{Eqn:EstLowmodesNNN}
\end{equation}
using at the last line (\ref{Eqn:BoundPot}).

\begin{rem}
We make two comments regarding the estimate above:

\begin{itemize}

\item since we work on intervals $J$ with size smaller than one, it is better to create powers of $|J|$ as large as possible by H\"older in time since
we want to prove these bounds on $J$ as large as possible. Therefore we place the nonlinearity in the space $L_{t}^{q} L_{x}^{r}(J)$ with the
smallest index $q$ such that $(q,r) \in \mathcal{W}_{m}^{'}$ and we control it by using the kinetic bounds (and the potential one) of the solution.

\item we interpolate with the potential bound (\ref{Eqn:BoundPot}) of the mollified energy since $Y_{1}$ is made of the low frequency part of $u$: this
is consistent with Section \ref{Sec:Bounds}. So we also use the global information of the
solution: this allows to lower the power of $N$ in the estimate of $Y_{1}$, which ultimately enlarges the size of $J$ such that
(\ref{Eqn:EstZms}) holds.

\end{itemize}
\label{Rem:LowNNN}
\end{rem}
We have

\begin{equation}
\begin{array}{l}
\| \langle D \rangle^{1-m} I (P_{\lesssim N} u P_{\lesssim N} u P_{\gtrsim N} u) \|_{L_{t}^{1} L_{x}^{\frac{2}{2-m}} (J)}  \\
\lesssim
\left(
\begin{array}{l}
|J| ^{\frac{11}{12}}\| \langle D \rangle^{1-m} I u \|_{L_{t}^{\infty} L_{x}^{\frac{2}{1-m}} (J)} \| P_{ \lesssim N} u \|_{L_{t}^{\infty} L_{x}^{6}(J)}
\| P_{ \gtrsim N} u \|_{L_{t}^{12} L_{x}^{3} (J)}  \\
+ |J| \| \langle D \rangle^{1-m} I u \|_{L_{t}^{\infty} L_{x}^{\frac{2}{1-m}} (J)}  \| P_{ \lesssim N} u \|^{2}_{L_{t}^{\infty} L_{x}^{4}(J)}
\end{array}
\right) \\
\lesssim |J|^{\frac{11}{12}}  X_{1} + |J| X_{2} .
\end{array}
\label{Eqn:EstLowNNHighN}
\end{equation}
By the previous computations $X_{2} \lesssim N^{2(1-s)} $. We have \\

\begin{equation}
\begin{array}{ll}
X_{1} & \lesssim  \| \langle D \rangle  I u \|_{L_{t}^{\infty} L_{x}^{2}(J)} N^{\frac{2(1-s)-}{3}}
\frac{\| \langle D \rangle^{1 - \frac{1}{4}}  I u \|_{L_{t}^{12} L_{x}^{3}(J)}}{N^{\frac{3}{4}}} \\
& \lesssim  \frac{N^{\frac{8(1-s)}{3}}}{N^{\frac{3}{4}}},
\end{array}
\label{Eqn:EstllonNNHighN}
\end{equation}
since (using again (\ref{Eqn:BoundPot}))

\begin{equation}
\begin{array}{ll}
\| P_{ \lesssim N} u \|_{L_{t}^{\infty} L_{x}^{6}(J)}  & \lesssim   \| I u \|^{\frac{1}{3}+}_{L_{t}^{\infty} L_{x}^{\infty-} (J)}
\| I u \|^{\frac{2}{3}-}_{L_{t}^{\infty} L_{x}^{4}(J)} \\
& \lesssim  N^{\frac{2(1-s)}{3}-}.
\end{array}
\nonumber
\end{equation}

\begin{rem}
We make two comments regarding the estimate (\ref{Eqn:EstLowNNHighN}). \\

\begin{itemize}

\item Notice that for the high frequency part of $u$ we use a dispersive bound. The dispersive bound creates a power $N^{\frac{3}{4}}$: this comes
from the fact that we work with data that have some regularity (the data is in $H^{s} (\mathbb{T}^{2}) \times H^{s-1} (\mathbb{T}^{2})$, $s > \frac{3}{8}$).
If we had chosen kinetic bounds to estimate \\
$\| \langle D \rangle^{1-m} I (P_{\lesssim N} u P_{\lesssim N} u P_{\gtrsim N} u) \|_{L_{t}^{1} L_{x}^{\frac{2}{2-m}} (J)}$, we would have found by using the same scheme as (\ref{Eqn:EstLowmodesNNN})

\begin{equation}
\begin{array}{l}
\| \langle D \rangle^{1-m} I (( P_{\lesssim N} u)^{2} P_{ \gtrsim N} u) \|_{L_{t}^{1} L_{x}^{\frac{2}{2-m}}(J)} \\
\lesssim |J|
\| \langle D \rangle^{1-m} I u \|_{L_{t}^{\infty} L_{x}^{\frac{2}{1-m}}(J)} \| P_{\lesssim N} u \|_{L_{t}^{\infty} L_{x}^{6}(J)}
\| P_{\gtrsim N} u \|_{L_{t}^{\infty} L_{x}^{3}(J)} + |J| X_{2} \\
\lesssim |J| N^{\frac{5(1-s)}{3}} \frac{ \| \langle D \rangle^{1- \frac{1}{3}} I u \|_{L_{t}^{\infty} L_{x}^{3}(J)}} {N^{\frac{1}{3}}} + |J| X_{2} \\
 \lesssim |J| N^{\frac{5(1-s)}{3}} \frac{ \| \langle  D \rangle I u \|_{L_{t}^{\infty} L_{x}^{2}(J)}} {N^{\frac{1}{3}}}
+ |J| X_{2} \\
 \lesssim |J| \frac{N^{\frac{8(1-s)}{3}}}{N^{\frac{1}{3}}} + |J| X_{2}:
\end{array}
\nonumber
\end{equation}
therefore it is a worse estimate than (\ref{Eqn:EstllonNNHighN}) for $|J| \sim N^{s-1}$ (this is the maximal size allowed by (\ref{Eqn:SizeJ})). In fact, this dispersive bound yields better results for regularity purposes, since we control $\| \langle D \rangle^{1-\frac{1}{4}} I u \|_{L_{t}^{12} L_{x}^{3}(J)}$ instead of $\| \langle D \rangle^{1-\frac{1}{3}} I u \|_{L_{t}^{\infty} L_{x}^{3}(J)}$. But there is a price to pay: we lose integrability in time. We are in $L_{t}^{12}$ and not in $L_{t}^{\infty}$, so we create a smaller power of $|J|$ by H\"older in time, which is a disadvantage in view of the first comment of
Remark \ref{Rem:LowNNN}. The computations showed for this example that the loss of integrability effect of the dispersive bound is weaker than the gain of
regularity effect. In fact, this observation is rather general and we shall use it until the end of this manuscript: we let the reader check
that it is better to use dispersive bounds for the high frequency part than the kinetic bounds. This is consistent
with Section \ref{Sec:Bounds}.

\item Notice that again we interpolate with the potential bound (\ref{Eqn:BoundPot}) of the mollified
energy for the low frequency part of $u$. This is again consistent with Section
\ref{Sec:Bounds} (see also the second point of Remark \ref{Rem:LowNNN}).

\end{itemize}
\label{Rem:LowNNHighN}
\end{rem}

We also have

\begin{equation}
\begin{array}{l}
\| \langle D \rangle^{1-m} I  (P_{\lesssim N} u P_{ \gtrsim N} u P_{ \gtrsim N} u)  \|_{L_{t}^{\frac{3}{2+m}} L_{x}^{\frac{6}{7-4m}}(J)} \\
\lesssim
\|\langle D \rangle^{1-m} I u \|_{L_{t}^{\frac{3}{m}} L_{x}^{\frac{6}{3-4m}} (J)} \left( |J|^{\frac{7}{12}}
\| P _{ \gtrsim N} u \|_{ L_{t}^{12} L_{x}^{3} (J)} \| P_{\lesssim N} u  \|_{L_{t}^{\infty} L_{x}^{3} (J)}
+ |J|^{\frac{1}{2}} \| P _{ \gtrsim N} u \|^{2}_{ L_{t}^{12} L_{x}^{3} (J)} \right)
 \\
\lesssim  Z_{m,s} (J,u)
\left( |J|^{\frac{7}{12}}
\frac{ \| \langle D \rangle^{1- \frac{1}{4}} I u \|_{L_{t}^{12} L_{x}^{3} (J)} }{N^{\frac{3}{4}}} \| I u \|_{L_{t}^{\infty} L_{x}^{4} (J)}
+ |J|^{\frac{1}{2}} \frac{ \| \langle D \rangle^{1- \frac{1}{4}} I u \|^{2}_{L_{t}^{12} L_{x}^{3} (J)} }{N^{\frac{3}{2}}}
\right) \\
\lesssim   Z_{m,s} (J,u) Z_{\frac{1}{4},s}(J,u)
\left( |J|^{\frac{7}{12}} \frac{N^{\frac{1-s}{2}}}{N^{\frac{3}{4}}} + |J|^{\frac{1}{2}} \frac{Z_{\frac{1}{4},s}(J,u)}{N^{\frac{3}{2}}}  \right).
\end{array}
\nonumber
\end{equation}
We also have

\begin{equation}
\begin{array}{ll}
\| \langle D \rangle^{1-m} I  ((P_{ \gtrsim N} u)^{3} ) \|_{L_{t}^{\frac{3}{2+m}} L_{x}^{\frac{6}{7-4m}}(J)} & \lesssim
\| \langle D \rangle^{1-m} I u \|_{L_{t}^{\frac{3}{m}} L_{x}^{\frac{6}{3-4m}} (J)} \| P_{\gtrsim N} u \|^{2}_{L_{t}^{3} L_{x}^{3} (J)} \\
& \lesssim Z_{m,s}(J,u) |J|^{\frac{1}{2}} \frac{ \| \langle D \rangle^{1- \frac{1}{4}} I u \|^{2}_{L_{t}^{12} L_{x}^{3} (J)} }{N^{\frac{3}{2}}}.
\end{array}
\nonumber
\end{equation}
In view of the above estimates we see that, using twice a continuity argument (first for $m=\frac{1}{4}$ and then, for $m = \frac{3}{8}$)

\begin{align*}
Z_{m,s}(J,u) \lesssim N^{1-s}.
\end{align*}

\begin{rem}
The reader can check that $Y_{1}$ is the most difficult term to bound: this is due to the fact that the other terms contain
the high frequency part of $u$ and, consequently, we can use the dispersive bounds to
our advantage (see the first point of Remark \ref{Rem:LowNNHighN}). Therefore the interpolation with the potential bound in estimating $Y_{1}$
is crucial.
\label{rem:Y1diff}
\end{rem}


\end{proof}

\subsection{Local variation of $E(Iu)$}
\label{Subsec:LocalVarEIu}

In this subsection, we estimate the variation of the mollified energy on $J$ satisfying (\ref{Eqn:SizeJ}).

\begin{prop}
Let $J =[a,b] \subset [0,\epsilon_{0}]$ satisfying (\ref{Eqn:SizeJ}). Let $s > \frac{3}{8}$. The variation
$Var(E(Iu)), J) := E(Iu(b)) - E(Iu(a))$ of $E(Iu)$  is given by

\begin{equation}
\begin{array}{ll}
Var(E(Iu),J) & \lesssim N^{4(1-s)}
  \left( \frac{|J|^{\frac{3}{4}}}{ N^{\frac{3}{2}-} } +
\frac{ |J|^{\frac{1}{2}}}{N^{\frac{5}{4}-}} \right) \cdot
\end{array}
\nonumber
\end{equation}
\label{Prop:EstVarNrj}
\end{prop}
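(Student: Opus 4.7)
The plan follows the standard I-method bookkeeping. The starting point is the energy identity: applying $I$ to (\ref{Eqn:DefocCubWave}) gives $\partial_{tt} Iu - \Delta Iu = -I(u^3)$, and testing this against $\partial_t Iu$ (after integration by parts in $x$) produces
\begin{equation*}
\frac{d}{dt} E(Iu(t)) = \int_{\mathbb{T}^2} \partial_t Iu \cdot \bigl[(Iu)^3 - I(u^3)\bigr] \, dx.
\end{equation*}
Integrating over $J = [a, b]$,
\begin{equation*}
\mathrm{Var}(E(Iu), J) = \int_a^b \int_{\mathbb{T}^2} \partial_t Iu \cdot \bigl[(Iu)^3 - I(u^3)\bigr] \, dx \, dt,
\end{equation*}
and the task reduces to estimating this space-time quadrilinear integral.

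The key point is that its symbol, $m(\xi_0) \bigl[\prod_{j=1}^3 m(\xi_j) - m(\xi_1+\xi_2+\xi_3)\bigr]$ on the hyperplane $\xi_0 + \xi_1+\xi_2+\xi_3 = 0$, vanishes when all $|\xi_j| \ll N$ and decays away from that region by smoothness of $m$. Following Paley--Littlewood I split $u = u_L + u_H$ with $u_L := P_{\lesssim N} u$, $u_H := P_{\gtrsim N} u$, expand $u^3$, and discard the purely low-low-low contribution. Each remaining term is then bounded by H\"older in space-time combined with the three types of bounds of Section \ref{Sec:Bounds}, exactly in the spirit of the proof of Proposition \ref{Prop:LocalBoundedness}: $\partial_t Iu$ in $L_t^\infty L_x^2$ via (\ref{Eqn:KinetMassBd1}); each low-frequency $Iu_L$ in $L_t^\infty L_x^4$ via the potential bound (\ref{Eqn:BoundPot}), consistent with Remark \ref{Rem:LowNNN}; and each high-frequency $Iu_H$ in $L_t^{12}L_x^3$ via the dispersive bound $Z_{1/4,s}(J, u) \lesssim N^{1-s}$ from (\ref{Eqn:EstZms}), together with the observation that the $I$-multiplier on a high-frequency mode saves $N^{-3/4-}$ per high factor, just as in Remark \ref{Rem:LowNNHighN}.

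Two distinct regimes emerge, according to the number of high-frequency factors. In the single-high regime, H\"older over time indices $(\infty, \infty, \infty, 12)$ produces $|J|^{3/4}$; the gain over the naive $N^{4(1-s)}$ rate is $N^{-3/2-}$, coming from one power of $N^{-3/4-}$ from the high $I$-multiplier and a further $N^{-3/4-}$ extracted from the cancellation in the symbol via a mean value estimate of the form $|m(\xi_H) - m(\xi_H + \xi_1 + \xi_2)| \lesssim (|\xi_1|+|\xi_2|)/|\xi_H| \cdot m(\xi_H)$, summed against the low-frequency factors handled in the potential bound. This yields a contribution $\lesssim N^{4(1-s)} |J|^{3/4}/N^{3/2-}$. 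In the multi-high regime (two or three high-frequency factors), H\"older over $(\infty, \infty, 12, 12)$ produces $|J|^{1/2}$; the symbol cancellation is weaker since two high factors leave insufficient low-frequency structure to run the mean value gain, so the total gain is only $N^{-5/4-}$, leading to a contribution $\lesssim N^{4(1-s)} |J|^{1/2}/N^{5/4-}$. Adding the two regimes yields the claimed estimate.

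The main technical obstacle is the sharpness of the commutator gain $N^{-3/2-}$ in the single-high regime: one really must perform a careful dyadic summation of the mean value bound on the symbol $m$ against the low-frequency Paley--Littlewood pieces, and interpolate with the potential bound exactly as emphasized in Remarks \ref{Rem:LowNNN} and \ref{Rem:LowNNHighN}. Once this gain is in hand, the H\"older and Strichartz scheduling is essentially the one already carried out in the proof of Proposition \ref{Prop:LocalBoundedness}.
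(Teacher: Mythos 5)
Your plan has the right skeleton (the energy flux identity, frequency decomposition, H\"older in time with the three types of bounds of Section \ref{Sec:Bounds}), but the bookkeeping does not hold together, and a few concrete steps fail.

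First, the time-H\"older arithmetic is wrong. Four factors placed with time exponents $(\infty,\infty,\infty,12)$ yield $|J|^{1-1/12}=|J|^{11/12}$, not $|J|^{3/4}$; and $(\infty,\infty,12,12)$ yields $|J|^{5/6}$, not $|J|^{1/2}$. The paper obtains $|J|^{3/4}$ by placing $\partial_t I u_1$ and $I u_4$ in $L_t^\infty$ and the two remaining pieces in $L_t^8 L_x^4$ (i.e.\ $(\infty,8,8,\infty)$, which gives $|J|^{1-1/8-1/8}=|J|^{3/4}$), and obtains $|J|^{1/2}$ by placing all four in $L_t^8 L_x^4$. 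These placements are tied to $Z_{3/8,s}(J,u)$, so your reliance on $Z_{1/4,s}$ and $L_t^{12}L_x^3$ throughout does not reproduce the stated powers of $|J|$.

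Second, the pairing of $|J|$-powers and $N$-gains is the reverse of what the proof of Proposition \ref{Prop:EstVarNrj} actually produces, and of what is used downstream in the iteration of Subsections \ref{Subsec:TotalVarEIu} and \ref{Subsec:VarNrjMolAdapt}. The $|J|^{3/4}$ contribution comes from the regimes with a low-frequency leftover (the paper's Cases 1.a, 1.b, 2.a) and carries a gain of $N^{-5/4-}$; the $|J|^{1/2}$ contribution comes from the four-high regime (Case 2.b) and carries the stronger gain $N^{-3/2-}$. Your pairing (stronger gain with the $|J|^{3/4}$ term) does not follow from your placements; if taken literally it would feed back through Subsection \ref{Subsec:TotalVarEIu} to a worse threshold than $s>\frac{4}{9}$.

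Third, there is a genuine gap in how the symbol cancellation is extracted. A coarse split $u = P_{\lesssim N}u + P_{\gtrsim N}u$ is not enough. The multiplier $\mu = 1 - \frac{m(n_1)}{m(n_2)m(n_3)m(n_4)}$ must be bounded differently according to the \emph{dyadic} sizes of all four frequencies: a mean-value estimate of the type $|\mu|\lesssim M_3/M_2$ is available only when the two largest frequencies sit on $n_1$ and $n_2$ and the other two are $\lesssim N$ (Case 1.b). In Case 2 (two of the $u_2,u_3,u_4$ comparable and largest), $\mu$ is actually \emph{unbounded}, of size $m(M_1)/(m(M_2)^2 m(M_4))$, and no mean-value argument applies; the saving then has to come entirely from the derivative factors in the dispersive bounds. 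Your argument does not address this case at all. Moreover, once the symbol estimate is in hand, one still needs to justify pulling the nonsmooth multiplier out of the quadrilinear integral; the paper does this through the Coifman--Meyer argument and the transference result of \cite{fansato} (Subsection \ref{Subsec:Procedure}). That step is essential and is absent from your sketch. In short, the idea is right but the proof must run on the full Paley--Littlewood pieces $P_{M_1},\dots,P_{M_4}$, case-split on which two dyadic indices are comparable, invoke the Coifman--Meyer/transference machinery to remove $\mu$, and only then apply H\"older with the kinetic/potential/dispersive bounds; each of these steps changes the exponents you wrote down.
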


\subsubsection{The proof}

Plugging $I$ into (\ref{Eqn:DefocCubWave}), computing the derivative of $E(Iu)$ and integrating on $J$
we see that

\begin{equation}
\begin{array}{ll}
Var(E(Iu),J)  & = \left| \int_{J} \int_{\mathbb{T}^{2}}
\partial_{t} I u  \left( (Iu)^{3} - I u^{3} \right) \, dx \, dt \right|.
\end{array}
\nonumber
\end{equation}
Hence

\begin{equation}
\begin{array}{l}
Var(E(Iu),J) \\
= \left| \sum \limits_{n_{1} + n_{2} + ... + n_{4} = 0} \int_{J}  \mu(n_{1},n_{2},n_{3},n_{4})
\widehat{\partial_{t} I u}(t,n_{1}) \widehat{Iu} (t,n_{2}) \widehat{Iu}(t,n_{3})
\widehat{Iu}(t,n_{4})  \, dt \right|,
\end{array}
\label{Eqn:FourEIu}
\end{equation}
with

\begin{equation}
\begin{array}{ll}
\mu(n_{1},n_{2},n_{3},n_{4}) & = 1 - \frac{m(n_{1})}{m(n_{2}) m(n_{3}) m(n_{4})}.
\end{array}
\nonumber
\end{equation}
Hence plugging the decomposition (\ref{Eqn:DecompPaley}) we are reduced to estimate

\begin{equation}
\begin{array}{l}
\sum_{ \vec{M} \in (2^{\mathbb{N}} \cup \{ 0 \})^{4}  } X_{\vec{M}},
\end{array}
\nonumber
\end{equation}
with

\begin{equation}
\begin{array}{l}
X_{\vec{M}} := \sum \limits_{n_{1} +... + n_{4} =0} \int_{J}
\mu(n_{1}, n_{2}, n_{3}, n_{4} ) \widehat{\partial_{t} I u_{1}}(t,n_{1}) \widehat{Iu_{2}} (t,n_{2}) \widehat{I u_{3}}(t,n_{3})
\widehat{Iu_{4}}(t,n_{4}) dt,
\end{array}
\nonumber
\end{equation}
$\vec{M}:= (M_{1},...,M_{4})$, and $u_{i} := P_{M_{i}} u$, $ 1 \leq  i \leq 4$.

\textbf{Strategy}: We have

\begin{equation}
\begin{array}{l}
|X_{\vec{M}}|  \lesssim  B(\vec{M}) |J|^{\frac{1}{q_{0}}}
\| \partial_{t}  I  u_{1} \|_{L_{t}^{q_{1}} L_{x}^{r_{1}}(J)}
\| I u_{2} \|_{L_{t}^{q_{2}} L_{x}^{r_{2}} (J)} ...
\| I u_{4} \|_{L_{t}^{q_{4}} L_{x}^{r_{4}} (J)}, \\
with \, B(\vec{M}): \, |\mu(\xi_{1},\xi_{2},\xi_{3},\xi_{4}| \lesssim B \, for \, |\xi_{1}| = |\xi_{2}+ \xi_{3} + \xi_{4}| \sim M_{1},..., |\xi_{4}| \sim M_{4},
\end{array}
\label{Eqn:ConclStrategy}
\end{equation}
with

\begin{equation}
\begin{array}{l}
(r_{1},...,r_{4}) \in (1, \infty)^{4}: \, \frac{1}{r_{1}} + \frac{1}{r_{2}} + \frac{1}{r_{3}} + \frac{1}{r_{4}} =1, \\
(q_{0},...,q_{4}) \in [1, \infty]^{4}: \, \frac{1}{q_{0}} + \frac{1}{q_{1}} + \frac{1}{q_{2}} + \frac{1}{q_{3}} + \frac{1}{q_{4}} =1.
\end{array}.
\nonumber
\end{equation}
The procedure is explained in Subsection \ref{Subsec:Procedure}. \\
By symmetry, we may assume that $M_{2} \geq M_{3} \geq M_{4}$.
Let  $ M_{i}^{*} $ be the dyadic numbers $M_{1}$,...,$M_{4}$ in order, i.e
$M_{1}^{*} \geq M_{2}^{*} \geq M_{3}^{*} \geq M_{4}^{*}$. In view of the convolution constraint, we may assume that
$M_{1}^{*} \sim M^{*}_{2}$ . We may also assume that $M_{1}^{*} \gtrsim N$ since if not the multiplier $\mu$ vanishes.
There are several cases \footnote{it is recommended that the reader ignores at the first reading the $M_{2}^{-}$ that is created to
make the terms summable.}:

\begin{itemize}

\item \textbf{Case} $\mathbf{1}$ : $ M_{1}^{*}= M_{1} $ and $ M_{2}^{*} =M_{2}$

\begin{itemize}

\item \textbf{Case} $\mathbf{1.a}$: $M_{3} \gtrsim N$. Then

\begin{equation}
\begin{array}{ll}
|\mu| & \lesssim  \frac{1}{m(M_{3}) m(M_{4})}.
\end{array}
\label{Eqn:MultCase1a}
\end{equation}
From (\ref{Eqn:KinetMassBd1}) we see that

\begin{equation}
\begin{array}{ll}
\| I u_{4} \|_{L_{t}^{\infty} L_{x}^{\infty-} (J)} & \lesssim \left\| \langle D \rangle I u_{4} \right\|_{L_{t}^{\infty} L_{x}^{2}(J)} \\
& \lesssim N^{1-s} \cdot
\end{array}
\nonumber
\end{equation}
Hence, using also Proposition \ref{Prop:LocalBoundedness}

\begin{equation}
\begin{array}{l}
|X_{\vec{M}}| \lesssim \frac{1}{m(M_{3}) m(M_{4})} |J|^{\frac{3}{4}} \| \partial_{t} I u_{1} \|_{L_{t}^{\infty} L_{x}^{2+} (J)}
\| I u_{2} \|_{L_{t}^{8} L_{x}^{4} (J)} \| I u_{3} \|_{L_{t}^{8} L_{x}^{4} (J)} \| I u_{4} \|_{L_{t}^{\infty} L_{x}^{\infty-} (J)} \\
\lesssim \frac{M_{1}^{+}}{m(M_{3}) m(M_{4})} |J|^{\frac{3}{4}} \frac{1}{M_{2}^{\frac{5}{8}}} \frac{1}{M_{3}^{\frac{5}{8}-}}  Z^{2}_{\frac{3}{8},s}(J,u) N^{2(1-s)} \\
\lesssim   |J|^{\frac{3}{4}} \frac{M_{2}^{-} N^{4(1-s)} }{N^{\frac{5}{4}-}} \cdot
\end{array}
\label{Eqn:EstCase1a}
\end{equation}

\item \textbf{Case} $\mathbf{1.b}$: $M_{3} \lesssim N$.

\begin{equation}
\begin{array}{ll}
|\mu| & \lesssim   \frac{|\nabla m(n_{2})| |n_{3} + n_{4}|}{m(n_{2})}  \\
& \lesssim \frac{M_{3}}{M_{2}} \cdot
\end{array}
\label{Eqn:MultCase1b}
\end{equation}
From this estimate and placing $u_{1}$, ..., $u_{4}$ in the same spaces as in the previous case, we also get

\begin{equation}
\begin{array}{ll}
| X_{\vec{M}} |  & \lesssim |J|^{\frac{3}{4}}  \frac{M_{2}^{-} N^{4(1-s)} }{N^{\frac{5}{4}-}} \cdot
\end{array}
\label{Eqn:EstXZ}
\end{equation}

\end{itemize}

\item \textbf{Case} $\mathbf{2}$ : $ M_{1}^{*}= M_{2} $ and $ M_{2}^{*} =M_{3}$

\begin{equation}
\begin{array}{ll}
|\mu| & \lesssim \frac{m(M_{1})}{(m(M_{2}))^{2} m(M_{4})} \cdot
\end{array}
\label{Eqn:EstMultCase2}
\end{equation}
There are two subcases:

\begin{itemize}

\item \textbf{Case} $\mathbf{2.a}$: $M_{4} \lesssim N$

Using this estimate and placing $u_{1}$, ..., $u_{4}$ in the same spaces as in Case $1.b$:, we also get
(\ref{Eqn:EstXZ}).

\item \textbf{Case} $\mathbf{2.b}$: $M_{4} \gtrsim N$

\begin{equation}
\begin{array}{ll}
|X_{\vec{M}}| & \lesssim \frac{m(M_1)}{(m(M_{2}))^{2} m(M_{4})} |J|^{\frac{1}{2}}  \| \partial_{t}  I u_{1} \|_{L_{t}^{8} L_{x}^{4} (J)}
\| I u_{2} \|_{L_{t}^{8} L_{x}^{4} (J)} \| I u_{3} \|_{L_{t}^{8} L_{x}^{4} (J) } \| I u_{4} \|_{L_{t}^{8} L_{x}^{4} (J)} \\
& \lesssim \frac{m(M_1{})}{(m(M_{2}))^{2} m(M_{4})} |J|^{\frac{1}{2}}  \frac{ M_{1}^{\frac{3}{8}} }{M_{2}^{\frac{5}{8}} M_{3}^{\frac{5}{8}}
M_{4}^{\frac{5}{8}}}
\left\| \partial_{t} \langle D \rangle^{-\frac{3}{8}} I u_{1} \right\|_{L_{t}^{8} L_{x}^{4} (J)}
\left\| \langle D \rangle^{1 -\frac{3}{8}} I u_{2} \right\|_{L_{t}^{8} L_{x}^{4} (J)} \\
& \left\| \langle D \rangle^{1 -\frac{3}{8}} I u_{3} \right\|_{L_{t}^{8} L_{x}^{4} (J)} \left\| \langle D \rangle^{1 -\frac{3}{8}} I u_{4} \right\|_{L_{t}^{8} L_{x}^{4} (J)}  \\
& \lesssim \frac{m(M_{1})}{(m(M_{2}))^{2} m(M_{4})} \frac{ M_{1}^{\frac{3}{8}} }{M_{2}^{\frac{5}{8}} M_{3}^{\frac{5}{8}} M_{4}^{\frac{5}{8}}}
Z_{\frac{3}{8},s} (J,u_{1})
 Z_{\frac{3}{8},s} (J,u_{2})  Z_{\frac{3}{8},s} (J,u_{3}) Z_{\frac{3}{8},s} (J,u_{4}) \\
& \lesssim |J|^{\frac{1}{2}} \frac{M_{2}^{-} N^{4(1-s)}}{ N^{\frac{3}{2}-}} \cdot
\end{array}
\nonumber
\end{equation}

\end{itemize}

\end{itemize}

\begin{rem}

We make two remarks regarding the proof of Proposition \ref{Prop:EstVarNrj}.

\begin{enumerate}

\item Since we work again on intervals $J$ with size $ < 1$, it is better to create large powers of $|J|$ in order to have a better estimate
of $Var(E(Iu),J)$.

\item Notice that on high frequencies \footnote{i.e for frequencies $ \gtrsim N$} we mostly use the dispersive bounds: this is consistent
with the first point of Remark \ref{Rem:LowNNHighN}.

\end{enumerate}

\end{rem}

\subsection{Total variation of $E(Iu)$ and global well-posedness for $ s > \frac{4}{9}$}
\label{Subsec:TotalVarEIu}

Partitioning $[0,\epsilon_{0}]$ into subintervals $J$ of size $ c N^{s-1} $   with $c$ defined in (\ref{Eqn:SizeJ})
(except maybe the last one), we see by iteration that

\begin{align}
\sup_{t \in [0,\epsilon_{0}]} \left| E(Iu(t)) - E(Iu_{0}) \right|  & \lesssim \frac{1}{|J|} \left(
\frac{ |J|^{\frac{3}{4}} N^{4(1-s)}}{N^{\frac{5}{4}-}} +
\frac{ |J|^{\frac{1}{2}} N^{4(1-s)} }{N^{\frac{3}{2}-}} \right) \nonumber \\
& \leq 2 C N^{2(1-s)}, \nonumber
\end{align}
where at the last line we choose $ N \gg 1$ so that (\ref{Eqn:EstNrjGoal}) holds \footnote{such a choice is possible since $s > \frac{4}{9}$}. This proves global well-posedness for $s > \frac{4}{9}$.

\section{Global well-posedness for $ s > \frac{2}{5}$}
\label{Section:Gwpbelow12}

In this section we prove global well-posedness for $s > \frac{2}{5}$. In order to do that,
we implement the use of the potential bound for the low frequency part within an adapted linear-nonlinear decomposition \cite{troy}. \\
\\
\underline{Assumption}: we may assume without loss of generality that $ s < \frac{1}{2}$ since we have already proved global well-posedness
for $s \geq \frac{1}{2}$ in the previous section. \\

\subsection{Strategy}

We recall the strategy of the adapted-linear nonlinear decomposition \cite{troy}. \\
Our goal is to enlarge the size of $J$ for which we prove dispersive bounds such as
(\ref{Eqn:EstZms}): this allows to reduce the number of iterations to estimate the variation of the mollified energy on the whole interval
$[0,\epsilon_{0}]$, which yields eventually a better estimate of the total variation and consequently global existence of solutions of
(\ref{Eqn:DefocCubWave}) for rougher data. \\
Assume for a while that $u$ does not have a nonlinear part, i.e $u = u_{l}^{[0,\epsilon_{0}]}$, then the Strichartz estimates
(\ref{Eqn:StrichWave}) are in fact global bounds. Indeed, plugging $ \langle D \rangle^{1-m} I$ into (\ref{Eqn:StrichWave}), we see, using (\ref{Eqn:AprBdMolNrj}), that

\begin{equation}
Z_{m,s}([0,\epsilon_{0}],u) \lesssim N^{1-s}.
\nonumber
\end{equation}
Assume now that $u$ does not have a linear part, i.e $u=u_{nl}^{[0,\epsilon_{0}]}$. It was observed in \cite{bourgbook} that the nonlinear part is smoother
than $H^{s}$. We measure in Proposition \ref{Prop:EstLinNonlin} this gain of regularity. The gain of regularity can be used to our advantage to estimate the variation of the mollified energy, since the variation is nonzero only if at least
one of the Paley-Littlewood pieces of the solution is supported on high frequencies: see Subsection \ref{Subsec:LocalVarEIu}. Again,
one would like to prove this gain of regularity on an interval $J:=[0,b] \subset [0,\epsilon_{0}]$ as large as possible: this is mostly done by
interpolation with the potential bound and it is explained in the proof of Proposition \ref{Prop:EstLinNonlin}.\\
The assumptions $u=u_{l}^{[0,\epsilon_{0}]}$ and $u=u_{nl}^{[0,\epsilon_{0}]}$ are of course not true and one has to plug
the following decomposition

\begin{equation}
u=u_{l}^{[0,\epsilon_{0}]} + u_{nl}^{[0,\epsilon_{0}]}
\label{Eqn:DecompPrim}
\end{equation}
into the variation. But there is an issue: if we perform the decomposition (\ref{Eqn:DecompPrim}) the gain of regularity can
only be used up to $b$. Instead we perform an adapted linear-nonlinear decomposition: we subdivide the whole interval $[0,\epsilon_{0}]$ into subintervals $J$ of same size to be determined (except maybe the last one), and we plug $u=u_{l}^{J} + u_{nl}^{J}$ into the variation so that we can use the gain of regularity from $0$ to
$\epsilon_{0}$.

\subsection{Estimates for the linear part and the nonlinear part}
\label{Subsec:EstLinNonlin}

We define the number $\alpha$ to be

\begin{equation}
\begin{array}{ll}
\alpha & :=  \max{ \left( \frac{ |J| N^{2(1-s)+} }{N}, |J|^{\frac{1}{2}} \frac{N^{2(1-s)}}{N^{\frac{3}{2}}},
|J|^{\frac{7}{12}} \frac{N^{\frac{3(1-s)}{2}}}{N^{\frac{3}{4}}},
 |J|^{\frac{1}{2}} \frac{N^{2(1-s)} \langle |J| N^{1-s} \rangle^{\frac{1}{8}}}{N^{\frac{3}{2}}}
\right)} \\
& := \max{\left( \alpha_{1},...,\alpha_{4} \right)}.
\end{array}
\nonumber
\end{equation}

\begin{prop}

Let $J:= [a,b] \subset [0,\epsilon_{0}]$. If $k \in \{ l,nl \}$ then let

\begin{equation}
X_{k} := \max{ \left(
\begin{array}{l}
\| P_{\gtrsim N} \langle D \rangle^{1 - \frac{3}{8}} I u_{k}^{J} \|_{L_{t}^{8} L_{x}^{4} (J)},  \| P_{\gtrsim N} \partial_{t}  \langle D \rangle^{-\frac{3}{8}}
I u_{k}^{J} \|_{L_{t}^{8} L_{x}^{4} (J)} \\
\| P_{\gtrsim N}  \langle D \rangle^{1 - \frac{1}{4}} I u_{k}^{J} \|_{L_{t}^{12} L_{x}^{3}(J)},
\| P_{\gtrsim N} \langle D \rangle^{-\frac{1}{4}} \partial_{t}  I u_{k}^{J} \|_{L_{t}^{12} L_{x}^{3}(J)}
\end{array}
 \right) }.
\end{equation}
Then

\begin{align}
X_{l} \lesssim N^{1-s}.
\label{Eqn:Boundalphillin}
\end{align}
Assume furthermore that $ \alpha \lesssim 1$. Then

\begin{align}
X_{nl} \lesssim \alpha N^{1-s} \cdot
\label{Eqn:Boundalphi}
\end{align}

\label{Prop:EstLinNonlin}
\end{prop}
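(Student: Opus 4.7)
The linear estimate (\ref{Eqn:Boundalphillin}) is a direct application of the Strichartz bound (\ref{Eqn:StrichWave}) without source term. One checks that $(8,4)\in\mathcal{W}_{3/8}$ and $(12,3)\in\mathcal{W}_{1/4}$, so each of the four quantities defining $X_l$ has the form $\|(\tilde w,\partial_t\langle D\rangle^{-1}\tilde w)\|_{L^q_t L^r_x(J)}$ with $\tilde w:=P_{\gtrsim N}\langle D\rangle^{1-m}Iu_l^J$ for the appropriate $m\in\{\frac{1}{4},\frac{3}{8}\}$. Since $u_l^J$ is a free wave with data $(u(a),\partial_t u(a))$ and $P_{\gtrsim N}$, $\langle D\rangle^{1-m}$, $I$ all commute with the flow, plugging into (\ref{Eqn:StrichWave}) with $F=0$ yields $X_l\lesssim\|(\langle D\rangle Iu(a),\partial_tIu(a))\|_{L^2\times L^2}\lesssim N^{1-s}$ by the kinetic bound (\ref{Eqn:KinetMassBd1}).

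For the nonlinear part, I would apply the Strichartz estimate \emph{with} gain of derivative (\ref{Eqn:StrichWaveGain}) to $v:=P_{\gtrsim N}\langle D\rangle^{1-m}Iu_{nl}^J$, which solves $\Box v=-P_{\gtrsim N}\langle D\rangle^{1-m}I(u^3)$ with zero Cauchy data at $t=a$. With $m=\frac{3}{8}$ for the first two norms in $X_{nl}$ and $m=\frac{1}{4}$ for the last two, the $\langle D\rangle^{m-1}$ on the right-hand side of (\ref{Eqn:StrichWaveGain}) cancels the $\langle D\rangle^{1-m}$ and one obtains the master inequality
\begin{equation}
X_{nl} \lesssim \|P_{\gtrsim N} I(u^3)\|_{L_t^1 L_x^2(J)}.
\nonumber
\end{equation}
All subsequent work goes into estimating this single quantity.

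The remaining task is handled by inserting the two decompositions $u=P_{\lesssim N}u+P_{\gtrsim N}u$ and $u=u_l^J+u_{nl}^J$ into the cube and expanding into the four Paley-Littlewood pieces $(P_{\lesssim N}u)^{3-j}(P_{\gtrsim N}u)^j$, $j=0,1,2,3$, further splitting each $P_{\gtrsim N}u$ as $P_{\gtrsim N}u_l^J+P_{\gtrsim N}u_{nl}^J$. Following the strategy of Section \ref{Sec:Bounds} and the philosophy of Remark \ref{Rem:LowNNHighN}, factors of $P_{\lesssim N}u$ are estimated by the potential bound (\ref{Eqn:BoundPot}) after replacing $P_{\lesssim N}u$ with $Iu$ (which is legitimate since on $|\xi|\lesssim N$ the symbol $1/m$ is bounded by a constant), while factors of $P_{\gtrsim N}u$ are controlled by the dispersive norms $X_l$ and $X_{nl}$. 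After H\"older in time on $J$ and a Bernstein move to land in $L^2_x$, each of the four pieces contributes one of the summands $\alpha_1,\alpha_2,\alpha_3,\alpha_4$ times $N^{1-s}$; the negative powers of $N$ appearing in the $\alpha_i$'s record the gain of regularity from either the outer $P_{\gtrsim N}I$ acting on a low-frequency product (the $N^{-1}$ in $\alpha_1$), or from transferring $\langle D\rangle^{1-m}$ onto a $P_{\gtrsim N}$ factor and exploiting the $N^{1-s}|\xi|^{s-1}$ bound on $m$ (the $N^{-3/4}$ and $N^{-3/2}$ in $\alpha_2,\alpha_3,\alpha_4$).

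The main obstacle is that terms in which a $P_{\gtrsim N}u_{nl}^J$ factor appears produce a copy of $X_{nl}$ on the right-hand side, so the estimate closes only schematically as $X_{nl}\lesssim\alpha N^{1-s}+\alpha X_{nl}$. This is exactly the role of the hypothesis $\alpha\lesssim 1$: with the implicit constant chosen small enough the nonlinear feedback $\alpha X_{nl}$ is absorbed into the left-hand side, giving $X_{nl}\lesssim\alpha N^{1-s}$. A standard continuity argument in the length of $J$, based on the fact that all $\alpha_i\to 0$ as $|J|\to 0$, legitimises the bootstrap. The somewhat unusual factor $\langle|J|N^{1-s}\rangle^{1/8}$ in $\alpha_4$ is the price paid for running the bootstrap through a mixed $L_t^8L_x^4$--$L_t^{12}L_x^3$ interpolation on the piece $P_{\lesssim N}u\cdot(P_{\gtrsim N}u_{nl}^J)^2$, where one of the two high-frequency factors is only controlled by the $L_t^{12}L_x^3$ component of $X_{nl}$.
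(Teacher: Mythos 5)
Your treatment of the linear part is correct and essentially identical to the paper's one-line proof via (\ref{Eqn:StrichWave}) and (\ref{Eqn:KinetMassBd1}).

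For the nonlinear part there is a real gap. Your ``master inequality'' $X_{nl}\lesssim \|P_{\gtrsim N}I(u^3)\|_{L_t^1L_x^2(J)}$ is a true consequence of (\ref{Eqn:StrichWaveGain}), but committing \emph{all four} frequency pieces to the $L_t^1L_x^2$ dual norm is not the paper's route and, more importantly, does not close. The paper uses (\ref{Eqn:StrichWaveGain}) with the $L_t^1L_x^2$ forcing space only for $u_{nl}^{J,1}=\Box^{-1}(P_{\gtrsim N}u\,(P_{\lesssim N}u)^2)$, i.e.\ the piece with a single high-frequency factor, where the two low factors can sit in $L_t^\infty L_x^\infty$; for $u_{nl}^{J,2}$ and $u_{nl}^{J,3}$ it instead uses (\ref{Eqn:StrichWave}) \emph{without} gain of derivative, placing the forcing in the dual admissible spaces $L_t^{\frac{3}{2+m}}L_x^{\frac{6}{7-4m}}$, whose spatial exponent is well below $2$. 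That choice is not cosmetic. Consider $(P_{\gtrsim N}u)^3$ in $L_t^1L_x^2$. The bounds you have at your disposal for $P_{\gtrsim N}u$ are the dispersive $L_t^8L_x^4$, $L_t^{12}L_x^3$ norms encoded in $X_l,X_{nl}$, and the kinetic $L_t^\infty L_x^2$ bound; any spatial H\"older split $\frac{1}{r_1}+\frac{1}{r_2}+\frac{1}{r_3}=\frac12$ with $r_i\in\{2,3,4\}$ overshoots $\frac12$, so at least one factor must be put in $L_x^\infty$. But $\|P_{\gtrsim N}u\|_{L_t^\infty L_x^\infty}$ has no finite kinetic bound — Bernstein gives $\|P_M u\|_{L_t^\infty L_x^\infty}\lesssim M^{1-s}$ and $\sum_{M\gtrsim N}M^{1-s}$ diverges — and neither interpolation with $L_t^\infty L_x^2$ nor Sobolev on the dispersive norms reaches $L_x^6$ with a multiplier bounded in terms of $X_{nl}$ (the symbol $|\xi|^{1/3-3/4}I^{-1}(\xi)\sim N^{s-1}|\xi|^{7/12-s}$ is unbounded on $|\xi|\gtrsim N$ for $s<7/12$). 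The paper avoids this by targeting $L_x^{6/(7-4m)}$, where $L_x^3\times L_x^3\times L_x^3$ already suffices.

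A second, subtler defect of the $L_t^1L_x^2$ target is that it forecloses the interpolation with the potential bound (\ref{Eqn:BoundPot}) for the low-frequency factor in $u_{nl}^{J,2}$: with two high factors in $L_x^4$ one is forced to take the low factor in $L_x^\infty$, i.e.\ the weaker kinetic bound $N^{(1-s)+}$ rather than $\|Iu\|_{L_t^\infty L_x^4}\lesssim N^{(1-s)/2}$. As Remark~\ref{Rem:InterpAgain} explains, giving up that interpolation shrinks $|\bar J|$ and loses the improvement over $s>4/9$, which is the whole point of this section. So the correct statement is not that all subsequent work estimates $\|P_{\gtrsim N}I(u^3)\|_{L_t^1L_x^2}$, but that one must split $u_{nl}^J$ into $u_{nl}^{J,1},u_{nl}^{J,2},u_{nl}^{J,3}$ and choose a \emph{different} dual Strichartz pair for each piece, reserving $L_t^1L_x^2$ for the single-high piece only.
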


\begin{rem}
It was proved in Proposition \ref{Prop:LocalBoundedness} that for $J$ satisfying
(\ref{Eqn:SizeJ}) we have

\begin{equation}
\begin{array}{l}
\max{ \left(
\begin{array}{l}
\| P_{\gtrsim N} \langle D \rangle^{1 - \frac{3}{8}} I u_{k}^{J} \|_{L_{t}^{8} L_{x}^{4} (J)},  \| P_{\gtrsim N} \partial_{t}
\langle D \rangle^{-\frac{3}{8}} I u_{k}^{J} \|_{L_{t}^{8} L_{x}^{4} (J)} \\
\| P_{\gtrsim N}  \langle D \rangle^{1 - \frac{1}{4}} I u_{k}^{J} \|_{L_{t}^{12} L_{x}^{3}(J)},
\| P_{\gtrsim N}  \langle D \rangle^{-\frac{1}{4}} \partial_{t}  I u_{k}^{J} \|_{L_{t}^{12} L_{x}^{3}(J)}
\end{array}
 \right) } \lesssim N^{1-s}.
\end{array}
\nonumber
\end{equation}
Hence, combining this estimate with (\ref{Eqn:Boundalphillin}), we see that
$X_{nl} \lesssim N^{1-s}$ for $J$ satisfying (\ref{Eqn:SizeJ}). So we see from
(\ref{Eqn:Boundalphi}) that the gain of regularity holds for all $J$ such that $\alpha \lesssim 1$.
\label{Rem:Gain}
\end{rem}

\begin{proof}

Let $m \in \left\{ \frac{1}{4}, \frac{3}{8} \right\}$. \\
Plugging $\langle D \rangle^{1-m} I$ into (\ref{Eqn:StrichWave}) (and using (\ref{Eqn:AprBdMolNrj})) we easily see that (\ref{Eqn:Boundalphillin}) holds. \\
Next we prove (\ref{Eqn:Boundalphi}). \\
Let $u_{nl}^{J,1}:= \Box^{-1} \left( P_{\gtrsim N} u (P_{ \lesssim N} u)^{2} \right)$,
$ u_{nl}^{J,2}:= \Box^{-1} \left( (P_{ \gtrsim N} u)^{2} P_{\lesssim N} u \right) $ and
$ u_{nl}^{J,3}:= \Box^{-1} \left( (P_{ \gtrsim N} u)^{3} \right) $
such that for $p \in \{1,2,3\}$, $ ( u_{nl}^{J,p}(a), \partial_{t} u_{nl}^{J,p}(a)
) := (0,0)$. Write $ P_{\gtrsim N} u_{nl}^{J} = \sum \limits_{p=1}^{3} u_{nl}^{J,p} $.  \\
By (\ref{Eqn:StrichWaveGain}) and (\ref{Eqn:KinetMassBd1}) we have

\begin{equation}
\begin{array}{l}
\max{ ( \| \partial_{t} \langle D \rangle^{1-m}  I u^{J,1}_{nl} \|_{L_{t}^{\frac{3}{m}} L_{x}^{\frac{6}{3-4m}} (J)}, \| \langle D \rangle^{2-m } I u^{J,1}_{nl} \|_{L_{t}^{\frac{3}{m}} L_{x}^{\frac{6}{3-4m}} (J)} )} \\
\lesssim \left\| \langle D \rangle I ( P_{\gtrsim N} u ( P_{\lesssim N } u)^{2} ) \right\|_{L_{t}^{1} L_{x}^{2} (J)} \\
\lesssim |J| \left(
\begin{array}{l}
\| \langle D \rangle I P_{\gtrsim N} u \|_{L_{t}^{\infty} L_{x}^{2} (J)} \| P_{\lesssim N} u \|^{2}_{L_{t}^{\infty} L_{x}^{\infty} (J)} \\
+ \| \langle D \rangle I P_{\lesssim N} u \|_{L_{t}^{\infty} L_{x}^{\infty}(J)} \| P_{\lesssim N} u \|_{L_{t}^{\infty} L_{x}^{\infty}(J)}
\| P_{\gtrsim N} u \|_{L_{t}^{\infty} L_{x}^{2}(J)}
\end{array}
\right) \\
\lesssim |J| N^{3(1-s)+},
\end{array}
\label{Eqn:HFreq1LFreq2}
\end{equation}
where we use

\begin{equation}
\begin{array}{l}
 \| P_{\gtrsim N} u \|_{L_{t}^{\infty} L_{x}^{2}(J)} \; \lesssim \sum_{M \gtrsim N} \frac{\| \langle D \rangle I u \|_{L_{t}^{\infty} L_{x}^{2}(J)}}{M} \; \lesssim
\frac{N^{1-s}}{N}, \\
\| \langle D \rangle I P_{\lesssim N } u \|_{L_{t}^{\infty} L_{x}^{\infty}(J)} \; \lesssim
\sum_{0 \leq M \lesssim N} \| \langle D \rangle P_{M} u \|_{L_{t}^{\infty} L_{x}^{\infty}(J)}
\; \lesssim N N^{1-s},
\end{array}
\nonumber
\end{equation}
and

\begin{equation}
\begin{array}{l}
\| P_{\lesssim N} u \|_{L_{t}^{\infty} L_{x}^{\infty} (J)} \;  \lesssim
\sum_{ 0 \leq M \lesssim N} \| P_{M} u \|_{L_{t}^{\infty} L_{x}^{\infty} (J)}
\; \lesssim  N^{+} \| \langle D \rangle I u \|_{L_{t}^{\infty} L_{x}^{2} (J)}.
\end{array}
\nonumber
\end{equation}
By (\ref{Eqn:StrichWave}), Proposition \ref{Prop:LocalBoundedness}, and
(\ref{Eqn:BoundPot}), we have

\begin{equation}
\begin{array}{l}
\max{ ( \| \partial_{t}  \langle D \rangle^{-m} I u_{nl}^{J,3} \|_{L_{t}^{\frac{3}{m}} L_{x}^{\frac{6}{3-4m}}(J)},
\|  \langle D \rangle^{1-m} I u_{nl}^{J,3} \|_{L_{t}^{\frac{3}{m}} L_{x}^{\frac{6}{3-4m}}(J)} )}  \\
\lesssim   \| \langle D \rangle^{1-m} I ( (P_{ \gtrsim N} u)^{3}) \|_{ L_{t}^{\frac{3}{2+m}} L_{x}^{\frac{6}{7-4m}}(J)}  \\
\lesssim  \| \langle D \rangle^{1-m} I P_{ \gtrsim N} u \|_{L_{t}^{\frac{3}{m}} L_{x}^{\frac{6}{3-4m}} (J) }
 \| P_{ \gtrsim N} u \|^{2}_{L_{t}^{3} L_{x}^{3}(J)} \\
\lesssim |J|^{\frac{1}{2}} (X_{l} + X_{nl}) \frac{  \| \langle D \rangle^{ 1 -\frac{1}{4}} I P_{ \gtrsim N} u \|^{2}_{L_{t}^{12} L_{x}^{3}(J)} }{N^{\frac{3}{2}}} \\
\lesssim |J|^{\frac{1}{2}} \frac{ (X_{l} + X_{nl})^{3}}{N^{\frac{3}{2}}} \\\
\lesssim |J|^{\frac{1}{2}} \frac{N^{3(1-s)}}{N^{\frac{3}{2}}} + |J|^{\frac{1}{2}} \frac{X_{nl}^{3}}{N^{\frac{3}{2}}} \cdot
\end{array}
\nonumber
\end{equation}

\begin{rem}

\begin{itemize}

\item Notice that we use dispersive estimates since we work with the high frequency part. This is consistent with
the second point of Remark \ref{Rem:LowNNHighN}.

\item Notice also that when we estimate the nonlinearity we decompose $u$ into its linear part and its nonlinear part: this allows
to use (\ref{Eqn:Boundalphillin}).

\end{itemize}

\end{rem}


\begin{equation}
\begin{array}{l}
\max{ \left( \| \partial_{t} \langle D \rangle^{-m} I u_{nl}^{J,2} \|_{L_{t}^{\frac{3}{m}} L_{x}^{\frac{6}{3-4m}}(J)},
\| \langle D \rangle^{1-m} I u_{nl}^{J,2} \|_{L_{t}^{\frac{3}{m}} L_{x}^{\frac{6}{3-4m}}(J)} \right)}  \\
\lesssim \| \langle D \rangle^{1-m} I  (P_{\lesssim N} u  (P_{\gtrsim N} u)^{2}  \|_{L_{t}^{\frac{3}{2+m}} L_{x}^{\frac{6}{7-4m}}(J)} \\
\lesssim
|J|^{\frac{7}{12}} \| \langle D \rangle^{1-m} I P_{\gtrsim N} u \|_{L_{t}^{\frac{3}{m}} L_{x}^{\frac{6}{3-4m}} (J)}
\| P _{\gtrsim N} u \|_{ L_{t}^{12} L_{x}^{3} (J)} \| P_{\lesssim N} u  \|_{L_{t}^{\infty} L_{x}^{3} (J)} \\
+ |J|^{\frac{1}{2}} \| \langle D \rangle^{1-m} I P_{\lesssim N} u \|_{L_{t}^{\frac{3}{m}} L_{x}^{\frac{6}{3-4m}} (J)}
\| P _{\gtrsim N} u \|^{2}_{ L_{t}^{12} L_{x}^{3} (J)}
 \\
\lesssim
|J|^{\frac{7}{12}}  \| \langle D \rangle^{1-m} I P_{\gtrsim N} u \|_{L_{t}^{\frac{3}{m}} L_{x}^{\frac{6}{3-4m}} (J)}
\frac{\| \langle D \rangle^{1- \frac{1}{4}} I P_{\gtrsim N} u \|_{L_{t}^{12} L_{x}^{3} (J)}}{N^{\frac{3}{4}}} \| I u \|_{L_{t}^{\infty} L_{x}^{4} (J)} \\
+ |J|^{\frac{1}{2}} \|  \langle D \rangle^{1-m} I P_{\lesssim N} u \|_{L_{t}^{\frac{3}{m}} L_{x}^{\frac{6}{3-4m}} (J)}
\frac{ \| \langle D \rangle^{1- \frac{1}{4}} I P_{\gtrsim N} u \|^{2}_{L_{t}^{12} L_{x}^{3} (J)} }{N^{\frac{3}{2}}} \\
\lesssim
|J|^{\frac{7}{12}} \frac{(X_{l} + X_{nl})^{2}}{N^{\frac{3}{4}}} N^{\frac{1-s}{2}}
+ |J|^{\frac{1}{2}} N^{1-s} \langle |J| N^{1-s} \rangle^{\frac{1}{8}}   \frac{(X_{l} + X_{nl})^{2}}{N^{\frac{3}{2}}}
 \\
\lesssim  |J|^{\frac{7}{12}} \frac{N^{\frac{5(1-s)}{2}}}{N^{\frac{3}{4}}} +
|J|^{\frac{7}{12}} \frac{N^{\frac{1-s}{2}} }{N^{\frac{3}{4}}} X^{2}_{nl}
+ |J|^{\frac{1}{2}} \frac{N^{3(1-s)} \langle |J| N^{1-s} \rangle^{\frac{1}{8}}}{N^{\frac{3}{2}}}
+ |J|^{\frac{1}{2}} \frac{ N^{1-s} \langle |J| N^{1-s} \rangle^{\frac{1}{8}}}{N^{\frac{3}{2}}} X_{nl}^{2}.
\end{array}
\label{Eqn:LowNHighNN}
\end{equation}
Therefore (\ref{Eqn:Boundalphi}) holds.

\end{proof}
Since we are interested in using this gain of regularity on an interval $J$ as large as possible, the maximal size of $J$
for which this gain holds is $ |\bar{J}| := \max_{|J| < \epsilon_{0}: \, \alpha \lesssim 1} |J|$. One has for $N \gg 1$

\begin{equation}
\begin{array}{l}
|\bar{J}|  \sim \alpha^{-1}(1)  \sim  min_{1 \leq k \leq 4} \, \alpha_{k} ^{-1}(1)
\sim \alpha_{1}^{-1}(1) \sim N^{(2s-1)-}:
\end{array}
\label{Eqn:ProcJtde}
\end{equation}
in particular, the gain of regularity holds for intervals $J$ with size larger
than (\ref{Eqn:SizeJ}).


\begin{rem}
Notice that we did not interpolate with the potential bound in (\ref{Eqn:HFreq1LFreq2}). If we had used the potential bound
for (\ref{Eqn:HFreq1LFreq2}) we would have found (following the same steps as (\ref{Eqn:EstLowNNHighN}))

\begin{equation}
\begin{array}{l}
\max { \left( \| \partial_{t} \langle D \rangle^{-m} I u_{nl}^{J,1} \|_{L_{t}^{\frac{3}{m}} L_{x}^{\frac{6}{3-4m}}(J)},
\| \langle D \rangle^{1-m} I u_{nl}^{J,1} \|_{L_{t}^{\frac{3}{m}} L_{x}^{\frac{6}{3-4m}}(J)} \right)  } \\
\lesssim \| \langle D \rangle^{1-m} I ( (P_{ \lesssim N} u)^{2} P_{ \gtrsim N} u \|_{L_{t}^{1} L_{x}^{\frac{2}{2-m}}(J)} \\
\lesssim |J|N^{2(1-s)} + |J|^{\frac{11}{12}} \frac{N^{\frac{8(1-s)}{3}}}{N^{\frac{3}{4}}} \cdot
\end{array}
\nonumber
\end{equation}
Hence letting

\begin{equation}
\begin{array}{ll}
\bar{\alpha} & := \max \left( |J| N^{1-s}, |J|^{\frac{11}{12}} \frac{N^{\frac{5(1-s)}{3}}}{N^{\frac{3}{4}}}, |J|^{\frac{1}{2}} \frac{N^{2(1-s)}}{N^{\frac{3}{2}}}, |J|^{\frac{7}{12}} \frac{N^{\frac{3(1-s)}{2}}}{N^{\frac{3}{4}}}, |J|^{\frac{1}{2}} \frac{N^{2(1-s)} \langle |J| N^{1-s} \rangle^{\frac{1}{8}}}{N^{\frac{3}{2}}}
\right) \\
& := \max ( \bar{\alpha}_{1}, \bar{{\bar{\alpha}}}_{1}, \alpha_{2}, \alpha_{3}, \alpha_{4} )
\end{array}
\nonumber
\end{equation}
and assuming that $\bar{\alpha} \lesssim 1$, we would have found $X_{nl} \lesssim \bar{\alpha} N^{1-s}$. By following the same procedure
as (\ref{Eqn:ProcJtde}), we would have found

\begin{equation}
\begin{array}{l}
\max_{|J| < \epsilon_{0}: \alpha \lesssim 1} |J| \sim \bar{\alpha}_{1}^{-1}(1) \sim N^{s-1}:
\end{array}
\nonumber
\end{equation}
this size is smaller than $N^{(2s-1)-}$. It occurs that for (\ref{Eqn:HFreq1LFreq2}) it is better to use the Strichartz estimates with gain of derivative and the kinetic bound than the potential bound.

\end{rem}

\begin{rem}
However, the use of the potential bound (\ref{Eqn:BoundPot}) in (\ref{Eqn:LowNHighNN}) is crucial. Indeed recall that the maximal size for which the gain of regularity holds is $ \max_{|J| < \epsilon_{0}: \alpha \lesssim 1} |J| $. If we had only used
(\ref{Eqn:BoundPotNrj}) we would have found

\begin{equation}
\begin{array}{l}
\max{ \left( \| \partial_{t} \langle D \rangle^{-m} I u_{nl}^{J,2} \|_{L_{t}^{\frac{3}{m}} L_{x}^{\frac{6}{3-4m}}(J)},
\|  \langle D \rangle^{1-m} I u_{nl}^{J,2} \|_{L_{t}^{\frac{3}{m}} L_{x}^{\frac{6}{3-4m}}(J)} \right)}  \\
\lesssim   |J|^{\frac{7}{12}} \frac{N^{3(1-s)}}{N^{\frac{3}{4}-}} +
|J|^{\frac{7}{12}} \frac{N^{1-s} }{N^{\frac{3}{4}}} X^{2}_{nl}
+ |J|^{\frac{1}{2}} \frac{N^{3(1-s)} \langle |J| N^{s-1} \rangle^{\frac{1}{8}}}{N^{\frac{3}{2}}}
+ |J|^{\frac{1}{2}} \frac{\langle |J| N^{s-1} \rangle^{\frac{1}{8}}}{N^{\frac{3}{2}}} X_{nl}^{2}:
\end{array}
\nonumber
\end{equation}
Hence letting

\begin{equation}
\begin{array}{ll}
\bar{\alpha} & := \max \left( \frac{|J| N^{2(1-s)+}}{N}, |J|^{\frac{1}{2}} \frac{N^{2(1-s)}}{N^{\frac{3}{2}}}, |J|^{\frac{7}{12}}
\frac{N^{2(1-s)}}{N^{\frac{3}{4}-}}, |J|^{\frac{1}{2}} \frac{N^{2(1-s)} \langle |J| N^{1-s} \rangle^{\frac{1}{8}}}{N^{\frac{3}{2}}}
\right). \\
& := \max( \alpha_{1},\alpha_{2}, \bar{\alpha}_{3},\alpha_{4} ),
\end{array}
\nonumber
\end{equation}
we would have found $X_{nl} \lesssim \bar{\alpha} N^{1-s}$. By following the same procedure as (\ref{Eqn:ProcJtde}) we would have found

\begin{equation}
\begin{array}{l}
\max_{|J| < \epsilon_{0}: \alpha \lesssim 1} |J| \sim \bar{\alpha}_{3}^{-1}(1) \sim N^{\frac{3}{7}(8s-5)}:
\end{array}
\nonumber
\end{equation}
this size is smaller than $N^{(2s-1)-}$.
\label{Rem:InterpAgain}
\end{rem}

\subsection{Variation of $E(Iu)$ on $J \subset [0,\epsilon_{0}]$}
\label{Subsec:VarNrjMolAdapt}

In this subsection we estimate the variation of the mollified energy on $J$ such that $|J| \lesssim N^{(2s-1)-}$.

\begin{prop}
Let $J \subset [0,\epsilon_{0}]$ with size $ \lesssim N^{(2s-1)-} $. Let $s > \frac{3}{8}$. The variation of $E(Iu)$ on $J$ is given by

\begin{equation}
Var(E(Iu),J) \lesssim \frac{|J|^{\frac{3}{4}} N^{4(1-s)}}{N^{\frac{5}{4}-}}
+ \frac{ |J|^{\frac{1}{2}} N^{4(1-s)}}{N^{\frac{3}{2}-}} \cdot
\label{Eqn:EstMolNrjAdapt}
\end{equation}

\end{prop}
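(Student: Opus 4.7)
The plan is to replay the case analysis of Proposition \ref{Prop:EstVarNrj} verbatim, only replacing the direct dispersive input $Z_{m,s}(J,u) \lesssim N^{1-s}$ (which Proposition \ref{Prop:LocalBoundedness} only supplies on $|J| \lesssim N^{s-1}$) by the adapted linear--nonlinear bound of Proposition \ref{Prop:EstLinNonlin}, which remains valid on the larger interval $|J| \lesssim N^{(2s-1)-}$ since $\alpha \lesssim 1$ there.

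First I would return to the Fourier representation (\ref{Eqn:FourEIu}) and perform the Paley--Littlewood expansion into $X_{\vec M}$ with $u_i := P_{M_i} u$. By symmetry I may assume $M_2 \geq M_3 \geq M_4$, and by the convolution constraint that $M_1^* \sim M_2^* \gtrsim N$. The four subcases are exactly those of Proposition \ref{Prop:EstVarNrj}: Case 1 ($M_1^*=M_1,\ M_2^*=M_2$) with $M_3 \gtrsim N$ or $M_3 \lesssim N$, and Case 2 ($M_1^*=M_2,\ M_2^*=M_3$) with $M_4 \lesssim N$ or $M_4 \gtrsim N$; in each the pointwise multiplier bounds (\ref{Eqn:MultCase1a}), (\ref{Eqn:MultCase1b}), (\ref{Eqn:EstMultCase2}) are unchanged, since they are algebraic in the $m(M_i)$.

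The single substantive modification is at every slot where a high-frequency factor $P_{\gtrsim N} u_i$ previously produced a $Z_{m,s}(J,u)$ with $m\in\{\tfrac14,\tfrac38\}$. I would split $P_{\gtrsim N} u = P_{\gtrsim N} u_l^J + P_{\gtrsim N} u_{nl}^J$ and invoke Proposition \ref{Prop:EstLinNonlin}: (\ref{Eqn:Boundalphillin}) gives $X_l \lesssim N^{1-s}$ unconditionally, and since $|J| \lesssim N^{(2s-1)-}$ forces $\alpha \lesssim 1$, (\ref{Eqn:Boundalphi}) gives $X_{nl} \lesssim N^{1-s}$ as well. Therefore
\begin{equation}
\|P_{\gtrsim N}\langle D\rangle^{1-\frac38}Iu\|_{L^8_t L^4_x(J)},\ \|P_{\gtrsim N}\langle D\rangle^{1-\frac14}Iu\|_{L^{12}_t L^3_x(J)}\ \lesssim\ N^{1-s},
\nonumber
\end{equation}
together with their $\partial_t$-counterparts, which is precisely the input that drove the estimates (\ref{Eqn:EstCase1a}) and (\ref{Eqn:EstXZ}). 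The low-frequency factors continue to be controlled by the kinetic bound (\ref{Eqn:KinetMassBd1}) and, when interpolation is needed, by the potential bound (\ref{Eqn:BoundPot}), exactly as in Remark \ref{Rem:LowNNHighN}. Reassembling H\"older in time as in the original proof then yields, in every case, $|X_{\vec M}| \lesssim (M_2^*)^{-}\bigl(|J|^{3/4}N^{4(1-s)}/N^{5/4-} + |J|^{1/2}N^{4(1-s)}/N^{3/2-}\bigr)$, and summing the geometric factor $(M_2^*)^{-}$ over $\vec M$ gives (\ref{Eqn:EstMolNrjAdapt}).

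The main obstacle is bookkeeping rather than analysis: one must verify at each of the four Paley--Littlewood slots in each subcase that whenever a high-frequency factor is inserted in an $L^8_t L^4_x$ or $L^{12}_t L^3_x$ norm, the adapted decomposition applies with the same exponent $m\in\{\tfrac14,\tfrac38\}$ and the same time integrability as in Proposition \ref{Prop:EstVarNrj}, with no spurious loss appearing in the Case 2.b slot where \emph{all four} factors are high-frequency. No new dispersive ingredient is needed; the entire gain --- from $N^{s-1}$ up to $N^{(2s-1)-}$ in the admissible size of $J$ --- comes from the improved reach of Proposition \ref{Prop:EstLinNonlin}, and this is precisely what the iteration in Subsection \ref{Subsec:TotalVarEIu} will need to close global existence down to $s>\tfrac25$.
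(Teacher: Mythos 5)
Your proposal is essentially the paper's own proof, which is a one-liner: use the decomposition $u = u_{l}^{J} + u_{nl}^{J}$ together with Proposition \ref{Prop:EstLinNonlin} (noting that $\alpha \lesssim 1$ on $|J| \lesssim N^{(2s-1)-}$) to obtain the high-frequency dispersive bounds
$\|P_{\gtrsim N}\langle D\rangle^{1-m}Iu\|,\ \|P_{\gtrsim N}\partial_{t}\langle D\rangle^{-m}Iu\| \lesssim N^{1-s}$
on the enlarged interval, and then insert these into the Paley--Littlewood case analysis of Proposition \ref{Prop:EstVarNrj}. You elaborate exactly the same idea, with the same four subcases.

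One small note on emphasis: you single out Case 2.b (all four factors high-frequency) as the one requiring extra care, but that case is in fact the safest, since Proposition \ref{Prop:EstLinNonlin} supplies precisely the high-frequency dispersive controls needed there. The slot that actually deserves scrutiny is Case 1.b, where $u_{3} = P_{M_{3}}u$ with $M_{3}\lesssim N$ sits in $L_{t}^{8}L_{x}^{4}$ and the original argument extracts the gain $M_{3}^{-5/8-}$ from $Z_{3/8,s}(J,u)$; Proposition \ref{Prop:EstLinNonlin} only furnishes the $P_{\gtrsim N}$ version of that bound on the long interval, so one must check (as the paper also leaves implicit) that the low-frequency piece is handled by the unconditional Strichartz control of $u_{l}^{J}$ together with a suitable estimate of the low-frequency part of $u_{nl}^{J}$, or by a kinetic/potential substitute, without spoiling the $N^{-5/4-}$ gain. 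Apart from this bookkeeping remark your route is the paper's.
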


\begin{proof}

From (\ref{Eqn:Boundalphillin}), (\ref{Eqn:Boundalphi}), and the decomposition $u=u_{l}^{J} + u_{nl}^{J}$, we see that

\begin{equation}
\max{ \left(
\begin{array}{l}
\| P_{\gtrsim N} \langle D \rangle^{1 - \frac{3}{8}} I u \|_{L_{t}^{8} L_{x}^{4} (J)},  \| P_{\gtrsim N} \partial_{t}
\langle D \rangle^{-\frac{3}{8}} I u \|_{L_{t}^{8} L_{x}^{4} (J)} \\
\| P_{\gtrsim N}  \langle D \rangle^{1 - \frac{1}{4}} I u \|_{L_{t}^{12} L_{x}^{3}(J)},
\| P_{\gtrsim N}  \langle D \rangle^{-\frac{1}{4}} \partial_{t}  I u \|_{L_{t}^{12} L_{x}^{3}(J)}
\end{array}
 \right) }
\lesssim N^{1-s} \cdot
\nonumber
\end{equation}
Hence plugging this bound into the proof of Proposition \ref{Prop:EstVarNrj}, we get (\ref{Eqn:EstMolNrjAdapt}).

\end{proof}

\subsection{Total variation of $E(Iu)$ and global existence for $s > \frac{2}{5}$ }

Partitioning $[0,\epsilon_{0}]$ into subintervals $J$ of size $ \sim N^{(2s-1)-}$  (except maybe the last one), we get by iteration

\begin{equation}
\begin{array}{ll}
\sup_{t \in [0,\epsilon_{0}]} E(Iu(t))  & \lesssim  \frac{1}{|J|}
\left(
\begin{array}{l}
\frac{|J|^{\frac{3}{4}} N^{4(1-s)}}{N^{\frac{5}{4}-}} + \frac{ |J|^{\frac{1}{2}} N^{4(1-s)}}{N^{\frac{3}{2}-}}
\end{array}
\right). \\
\end{array}
\label{Eqn:SupNrjMol25}
\end{equation}
Hence we see that we must assume $s > \frac{2}{5}$ in order to choose $N \gg 1$ so that (\ref{Eqn:EstNrjGoal}) holds. This proves
global existence for $s > \frac{2}{5}$.\\
\\






\textbf{Acknowledgments}\\
\\
The author would like to thank Nikolay Tzvetkov for suggesting him this problem and for valuable discussions related to this
work. The author was supported by the ERC Advanced Grant no. 291214, BLOWDISOL, while he worked on this problem at Universit\'e de Cergy-Pontoise.
This manuscript is available on http://arxiv.org/abs/1411.6141.

\appendix
\section{}


\subsection{Leibnitz rule}

We prove a fractional Leibnitz rule on the torus. The proof is essentially well-known in the
literature in the euclidean space (see e.g \cite{christweins,KatoPonce,KenigPonceVega,taylor}). By using transference
multipliers results we can adapt it to the torus. We give a proof for the convenience of
the reader.

\begin{prop}
Let $s<1$, $\alpha \geq 1-s$, and $u_{1}, u_{2}, u_{3}$ be three smooth functions on $\mathbb{T}^{2}$. Then

\begin{equation}
\begin{array}{ll}
\| \langle D \rangle^{\alpha} I (u_{1}  u_{2} u_{3}) \|_{L^{r} (\mathbb{T}^{2})} & \lesssim
\| \langle D \rangle^{\alpha} I u_{1} \|_{L^{r_{1}}(\mathbb{T}^{2})} \| u_{2} \|_{L^{r_{2}} (\mathbb{T}^{2})} \| u_{3} \|_{L^{r_{3}} (\mathbb{T}^{2})} \\
& + \| u_{1} \|_{L^{\bar{r}_{1}} (\mathbb{T}^{2})} \| \langle D \rangle^{\alpha} I u_{2} \|_{L^{\bar{r}_{2}}(\mathbb{T}^{2})} \| u_{3} \|_{L^{\bar{r}_{3}}(\mathbb{T}^{2})} \\
& + \| u_{1} \|_{L^{\bar{\bar{r}}_{1}} (\mathbb{T}^{2})} \| u_{2} \|_{L^{\bar{\bar{r}}_{2}}(\mathbb{T}^{2})}
\| \langle  D \rangle^{\alpha} I u_{3} \|_{L^{\bar{\bar{r}}_{3}}(\mathbb{T}^{2})}
\end{array}
\nonumber
\end{equation}
under the conditions

\begin{equation}
\begin{array}{l}
 (r_{1},r_{2},r_{3}), \; (\bar{r}_2,\bar{r}_1,\bar{r}_3), \, \text{and} \; (\bar{\bar{r}}_3,\bar{\bar{r}}_1, \bar{\bar{r}}_2) \in (1,\infty) \times (1, \infty]^{2}, \; \text{and} \\
\sum \limits_{m=1}^{3} \frac{1}{r_{m}}  = \sum \limits_{m=1}^{3}
\frac{1}{\bar{r}_{m}} = \sum \limits_{m=1}^{3}
\frac{1}{_{\bar{\bar{r}}_{m}}} = \frac{1}{r} \cdot
\end{array}
\label{Eqn:CondFrac}
\end{equation}

\end{prop}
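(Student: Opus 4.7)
The plan is to reduce the inequality on the torus to the analogous inequality on $\mathbb{R}^{2}$ via the transference principle for Fourier multipliers (as noted in the paragraph preceding the statement), and then to establish the Euclidean version by a Littlewood--Paley paraproduct analysis of the multiplier $M(\xi) := m(\xi)\langle\xi\rangle^{\alpha}$.

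The first step I would take is to verify that $M$ is a symbol of non-negative order uniformly in $N$. Since $m(\xi)\sim 1$ for $|\xi|\lesssim N$ and $m(\xi)\sim |\xi|^{-(1-s)}$ for $|\xi|\gtrsim N$, the hypothesis $\alpha \geq 1-s$ yields the Mihlin-type bound $|\partial^{\beta}M(\xi)|\lesssim \langle\xi\rangle^{\alpha-|\beta|}$ uniformly in $N$. This is the quantitative input that allows $M(D)$ to be treated as a standard Coifman--Meyer/Kato--Ponce operator.

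Next I would perform a trilinear paraproduct decomposition $u_{1}u_{2}u_{3} = \Pi_{1}+\Pi_{2}+\Pi_{3}$, where in $\Pi_{j}$ the $j$-th factor carries the largest Littlewood--Paley frequency. On the support of $\Pi_{1}$ the convolution constraint forces the output frequency to be comparable to the frequency of $u_{1}$, so $\langle D\rangle^{\alpha}I$ effectively acts only on $u_{1}$; a square-function argument together with H\"older's inequality in the spatial variables then bounds $\|\langle D\rangle^{\alpha}I\Pi_{1}\|_{L^{r}}$ by the first term on the right-hand side, provided $(r_{1},r_{2},r_{3})$ satisfies \eqref{Eqn:CondFrac}. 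The other two pieces give the remaining two terms by the obvious symmetry.

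The main obstacle will be the high-high--low regime within each $\Pi_{j}$, namely the case where two of the three frequencies are comparable and dominate the third: there the output frequency can be strictly smaller than the dominant inputs, so one cannot naively ``pass the derivative through''. To handle this I would peel off the derivative-carrying factor and verify that the resulting bilinear symbol $M(\xi_{1}+\xi_{2}+\xi_{3})/M(\xi_{j})$ lies in the Coifman--Meyer class uniformly in $N$; this is precisely where $\alpha\geq 1-s$ is needed, so that the quotient has controlled derivatives even when all three frequencies exceed $N$. The restriction that the derivative-carrying index ($r_{1}$, $\bar{r}_{2}$, $\bar{\bar{r}}_{3}$ respectively) lies in $(1,\infty)$ rather than $(1,\infty]$ is dictated by the $L^{p}$-boundedness range of the Coifman--Meyer bilinear operator, while the two companion indices are allowed to equal $\infty$ by standard paraproduct theory.
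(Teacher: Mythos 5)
Your plan (transfer to $\mathbb{R}^{2}$, treat $M(\xi):=m(\xi)\langle\xi\rangle^{\alpha}$ as a Mihlin symbol, and push the multiplier onto one factor in each paraproduct piece) is a genuinely different route from the paper's, but it contains a gap exactly in the place you flag as the "main obstacle." The paper instead first proves the plain fractional Leibniz rule for $\langle D\rangle^{m}$, $m\geq 0$, without the $I$-weight (Bony decomposition, citing Taylor's book for the Euclidean estimates and the Fan--Sato transference theorem for the torus), and only then deals with the $I$-weight by decomposing each factor into $P_{\lesssim N}$ and $P_{\gtrsim N}$ pieces and applying Bernstein-type inequalities; the only delicate piece is $P_{\lesssim N}\bigl(P_{\gtrsim N}u_{1}\,P_{\gtrsim N}u_{2}\bigr)$, which is handled by a further Bony decomposition in which the non-resonant terms are automatically localized at frequency $\sim N$ (so there is no infinite sum to worry about) and the resonant high-high term is again referred to Taylor plus transference.

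The gap in your proposal: the claim that $\sigma(\xi_{1},\xi_{2},\xi_{3}):=M(\xi_{1}+\xi_{2}+\xi_{3})/M(\xi_{j})$, restricted to the region where $\xi_{j}$ is dominant, lies in the Coifman--Meyer class is false in the regime that matters. Take the high-high-low configuration $|\xi_{1}|\sim|\xi_{2}|\gg|\xi_{3}|$ with small output $\eta:=\xi_{1}+\xi_{2}+\xi_{3}$ satisfying $2N\leq|\eta|\ll|\xi_{1}|$, and $j=1$. There the Mihlin bound on $M$ is sharp, $|M'(\eta)|\sim M(\eta)/\langle\eta\rangle$, and with $M(\xi)\sim N^{1-s}|\xi|^{\alpha-(1-s)}$ for $|\xi|\gtrsim N$ one finds
\begin{equation}
\bigl|\partial_{\xi_{3}}\sigma\bigr|\;\sim\;\frac{M(\eta)}{\langle\eta\rangle\,M(\xi_{1})}\;\sim\;\frac{|\eta|^{\alpha-2+s}}{|\xi_{1}|^{\alpha-1+s}}\,.
\nonumber
\end{equation}
The Coifman--Meyer condition (in either form \eqref{Eqn:Smoothmult} or \eqref{Eqn:Smoothmul2}) would require this to be $\lesssim |\xi_{1}|^{-1}$, i.e.\ $|\eta|^{\alpha-2+s}\lesssim|\xi_{1}|^{\alpha-2+s}$, which fails whenever $\alpha<2-s$ and $|\eta|\ll|\xi_{1}|$. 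But the range of application is precisely $1-s\leq\alpha<2-s$ (in the paper $\alpha\in\{5/8,3/4\}$, $s\in(2/5,1)$), so the hypothesis $\alpha\geq1-s$ does not cure this; that condition only makes $M$ monotone and gives the size bound $|\sigma|\lesssim1$, not the derivative bounds. This is the same reason the ordinary fractional Leibniz rule for $\langle D\rangle^{\alpha}$ with $0<\alpha<1$ cannot be derived by declaring $\langle\xi_{1}+\xi_{2}\rangle^{\alpha}/\langle\xi_{1}\rangle^{\alpha}$ to be a Coifman--Meyer symbol: the high-high piece genuinely requires summing geometrically over the output frequency using $\alpha>0$, which is exactly what the paraproduct estimates in Taylor (and hence the paper's step \eqref{Eqn:FracLeibnProd}) do. To repair your argument you would need to replace the "verify CM" step by a dyadic-output decomposition of the high-high piece together with a geometric sum, essentially reproducing the standard paraproduct argument rather than invoking Coifman--Meyer directly.
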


\begin{proof}

It is sufficient to prove
\begin{equation}
\| \langle D \rangle^{\alpha} I (u_{1} u_{2}) \|_{L^{r} (\mathbb{T}^{2})} \lesssim \| \langle D \rangle^{\alpha} I u_{1} \|_{L^{r_{1}} (\mathbb{T}^{2})}
\| u_{2} \|_{L^{r_{2}} (\mathbb{T}^{2})} + \| u_{1} \|_{L^{\bar{r}_{1}} (\mathbb{T}^{2})}
\| \langle D \rangle^{\alpha}  I u_{2} \|_{L^{\bar{r}_{2}} (\mathbb{T}^{2})}
\label{Eqn:FracLeibTwo}
\end{equation}
with $(r_{1},r_{2}, \infty)$ and $(\bar{r}_2, \bar{r}_1, \infty)$ satisfying (\ref{Eqn:CondFrac}).\\
\\
We first claim that  if $m \geq 0$ then

\begin{equation}
\| \langle D \rangle^{m} (u_1 u_2) \|_{L^{r}(\mathbb{T}^{2})} \lesssim \| \langle D \rangle^{m} u_1 \|_{L^{r_1}(\mathbb{T}^{2})}
\| u_2 \|_{L^{r_2}(\mathbb{T}^{2})} + \| u_1 \|_{L^{\bar{r}_1}(\mathbb{T}^{2})} \| \langle D \rangle^{m} u_2 \|_{L^{\bar{r}_2}(\mathbb{T}^{2})} \cdot
\label{Eqn:FracLeibnProd}
\end{equation}
Indeed if $m=0$ then it follows from H\"older inequality. So we may assume that $m >0$. In the sequel if $T$ is an operator then
$\widetilde{T}$ denotes an operator that behaves like $T$ in the frequency space and whose definition is allowed to change from one line line and even in the same line. The Bony decomposition yields

\begin{equation}
\begin{array}{ll}
u_1 u_2 & = \widetilde{P_0} u_1 \widetilde{P_0} u_2 + \widetilde{P_0} u_2 \sum \limits_{M > 16} P_{M} u_1
+ \widetilde{P_0} u_1  \sum \limits_{M > 16} P_{M} u_2
+ \sum \limits_{M > 16} P_M u_1 P_{ 16  < \cdot < \frac{M}{16}} u_2  \\
& + \sum \limits_{M > 16} P_M u_2 P_{ 16  < \cdot < \frac{M}{16}} u_1
+ \sum \limits_{\substack{M_1, M_2 > 16  \\ M_1 \sim M_2}} P_{M_1} u_1  P_{M_2} u_2 \\
& = W + X_1 + X_2 + Y_1 + Y_2 + Z
\end{array}
\nonumber
\end{equation}
with $W:= \widetilde{P_0} u_1 \widetilde{P_0} u_2 + \widetilde{P_1} u_1 \widetilde{P_0} u_2 + \widetilde{P_1} u_1 \widetilde{P_1} u_2$,
$ X_1 := \widetilde{P_0} u_2 \sum \limits_{M \gg 1} P_{M} u_1 $, \\
$ X_2 := \widetilde{P_0} u_1 \sum \limits_{M \gg 1} P_{M} u_2 $,
$Y_1 := \sum \limits_{M > 16} P_M u_1 P_{ 16  < \cdot < \frac{M}{16}} u_2  $, \\
$Y_2 := \sum \limits_{M > 16} P_M u_2 P_{ 16  < \cdot < \frac{M}{16}} u_1 $,
and $Z := \sum \limits_{\substack{M_1,M_2 > 16 \\ M_1 \sim M_2}} P_{M_1} u_1  P_{M_2} u_2 $.\\
\\
We only deal with the first term of $W$: the other terms are treated similarly. Writing
$ \widetilde{P_0} u_1  \widetilde{P_0} u_2 = \widetilde{P_0} (\widetilde{P_0} u_1 \widetilde{P_0} u_2) $, applying Bernstein-type
inequalities and H\"older inequality, we can bound it by the right-hand side of (\ref{Eqn:FracLeibnProd}). \\
We then consider the decomposition $X_1 +.... + Z$ in the euclidean space $\mathbb{R}^{2}$. We make the following notation:
the functions, operators, etc. are indexed by ``prime'' whenever we consider them as functions, operators, etc. in the
euclidean space. For example $\widehat{P_{0}^{'} f'}(\xi) = \phi(\xi) \widehat{f'}(\xi) $ where $\widehat{f'}(\xi)$
denotes the Fourier transform in $\mathbb{R}^{2}$ of a Schwartz function $f'$ on $\mathbb{R}^{2}$. Recall (see \cite{taylor}, p 105) that

\begin{equation}
\begin{array}{l}
\| \langle D \rangle^{m} X_1^{'} \|_{L^{r}(\mathbb{R}^{2})}, \, \| \langle D \rangle^{m} Y_1^{'} \|_{L^{r}(\mathbb{R}^{2})}
\lesssim  \| \langle D \rangle^{m} u_{1}^{'} \|_{L^{r_1}(\mathbb{R}^{2})}
\| u_{2}^{'} \|_{L^{r_2}(\mathbb{R}^{2})} \\
\| \langle D \rangle^{m} X_2^{'} \|_{L^{r}(\mathbb{R}^{2})}, \, \| \langle D \rangle^{m} Y_2^{'} \|_{L^{r}(\mathbb{R}^{2})}
\lesssim  \|  u_{1}^{'} \|_{L^{\bar{r}_1}(\mathbb{R}^{2})}
\| \langle D \rangle^{m} u_{2}^{'} \|_{L^{\bar{r}_2}(\mathbb{R}^{2})}\\
\| \langle D \rangle^{m} Z^{'} \|_{L^{r}(\mathbb{R}^{2})}  \lesssim \| \langle D \rangle^{m} u_{1}^{'} \|_{L^{r_1}(\mathbb{R}^{2})}
\| u_{2}^{'} \|_{L^{r_2}(\mathbb{R}^{2})}
\end{array}
\nonumber
\end{equation}
Hence, by Theorem 3, p 39 of \cite{fansato}, we see that all these estimates can be transferred to the torus. For example
$ \| \langle D \rangle^{m} X_1 \|_{L^{r}(\mathbb{T}^{2})} \lesssim  \| \langle D \rangle^{m} u_{1} \|_{L^{r_1}(\mathbb{T}^{2})}
\| u_{2} \|_{L^{r_2}(\mathbb{T}^{2})} $.\\
\\
We then write $ \langle D \rangle^{\alpha} I (u_{1} u_{2}) = \langle D \rangle^{\alpha} I ( X_1 + X_2 + X_3 + X_4) $ with

\begin{equation}
\begin{array}{l}
X_1:=  P_{\lesssim N} u_{1} P_{\lesssim N} u_{2}; \;
X_2:= P_{\gtrsim N} u_1 P_{\lesssim N} u_2; \\
X_3 :=  P_{\lesssim N} u_1 P_{\gtrsim N} u_2; \; \text{and} \;
X_4 :=  P_{\gtrsim N} u_1 P_{\gtrsim N} u_2 \cdot
\end{array}
\nonumber
\end{equation}
Write  $ X_1 = \widetilde{P_{\lesssim N}} (  P_{\lesssim N} u_{1} P_{\lesssim N} u_{2})$. We see from
Bernstein-type inequalities and (\ref{Eqn:FracLeibnProd}) that
$ \| \langle D \rangle^{\alpha} I  X_1 \|_{L^{r} (\mathbb{T}^{2})}$ is bounded by the right-hand side of
(\ref{Eqn:FracLeibTwo}).\\
We then turn to $X_{2}$ and $X_{3}$. By symmetry it is sufficient to estimate
$ \| \langle D \rangle^{\alpha} I  X_{2}\|_{L^{r} (\mathbb{T}^{2})}$. Write
$ X_2 = X_{2,1} + X_{2,2}$ with $  X_{2,1} := \widetilde{P_{N}} u_{1} P_{\lesssim N} u_{2} $
and $X_{2,2} := \widetilde{P_{\gg N}} u_{1} P_{\lesssim N} u_{2} $. Again from Bernstein-type inequalities and
(\ref{Eqn:FracLeibnProd}) we see that
$ \| \langle D \rangle^{\alpha} I  X_{2,1}\|_{L^{r} (\mathbb{T}^{2})}$ is bounded by the right-hand side of
(\ref{Eqn:FracLeibTwo}). We then turn to estimating $X_{2,2}$. We have

\begin{equation}
\begin{array}{ll}
\| \langle D \rangle^{\alpha} I  X_{2,2} \|_{L^{r} (\mathbb{T}^{2})} & \lesssim
\left\| \left( \sum \limits_{M \gg N} | N^{1-s} M^{\alpha -(1-s)} P_{M} u_1  P_{\lesssim N} u_2|^{2} \right)^{\frac{1}{2}} \right\|_{L^{r}(\mathbb{T}^{2})} \\
& \lesssim \| \langle D \rangle^{\alpha} I u_1 \|_{L^{r_1} (\mathbb{T}^{2})} \| u_2 \|_{L^{r_2}(\mathbb{T}^{2})},
\end{array}
\nonumber
\end{equation}
We then turn to $X_{4}$. Write $X_{4}= X_{4,1} + X_{4,2}$ with
$X_{4,1} := P_{\lesssim N} \left( P_{\gtrsim N} u_1 P_{\gtrsim N} u_2 \right) $
and $X_{4,2} := P_{\gtrsim N} \left( P_{\gtrsim N} u_1 P_{\gtrsim N} u_2 \right)$.
Again, by Bernstein-type inequalities and (\ref{Eqn:FracLeibnProd}) we see that
$ \| \langle D \rangle^{\alpha} I  X_{4,2}\|_{L^{r} (\mathbb{T}^{2})}$ is bounded by the right-hand side of
(\ref{Eqn:FracLeibTwo}). Applying the Bony decomposition to $P_{\gtrsim N} u_1 P_{\gtrsim N} u_2$ we get

\begin{equation}
\begin{array}{ll}
P_{\lesssim N} \left( P_{\gtrsim N} u_1 P_{\gtrsim N} u_2 \right) & =  \sum \limits_{M \sim N} P_{\lesssim N}
(  P_M u_1 P_{N \lesssim \cdot < \frac{M}{16}} u_2 )
+ \sum \limits_{M \sim N} P_{\lesssim N} ( P_{ N  \lesssim \cdot < \frac{M}{16}} u_1 P_{M} u_2 ) \\
& + \sum \limits_{M_1 \sim M_2 \gtrsim N} P_{\lesssim N} ( P_{M_1} u_1  P_{M_2} u_2 )  \\
& = \bar{Y}_{1} + \bar{Y}_{2} + \bar{Z}
\end{array}
\nonumber
\end{equation}
Bernstein-type inequalities and a straightforward modification of the arguments used in \cite{taylor}, p 105 show that

\begin{equation}
\begin{array}{l}
\| \langle D \rangle^{\alpha} I \bar{Y}_1^{'} \|_{L^{r}(\mathbb{R}^{2})}
\lesssim  \| \langle D \rangle^{\alpha} I u_{1}^{'} \|_{L^{r_1}(\mathbb{R}^{2})}
\| u_{2}^{'} \|_{L^{r_2}(\mathbb{R}^{2})} \\
| \langle D \rangle^{\alpha} I \bar{Y}_2^{'} \|_{L^{r}(\mathbb{R}^{2})}
\lesssim  \| u_{1}^{'} \|_{L^{\bar{r}_1}(\mathbb{R}^{2})}
\| \langle D \rangle^{\alpha} I  u_{2}^{'} \|_{L^{\bar{r}_2}(\mathbb{R}^{2})}\\
\| \langle D \rangle^{\alpha} I \bar{Z}^{'} \|_{L^{r}(\mathbb{R}^{2})}  \lesssim \| \langle D \rangle^{\alpha} I u_{1}^{'} \|_{L^{r_1}(\mathbb{R}^{2})}
\| u_{2}^{'} \|_{L^{r_2}(\mathbb{R}^{2})}
\end{array}
\nonumber
\end{equation}
Hence, by Theorem 3, p 39 of \cite{fansato}, we see that all these estimates can be transferred to the torus.

\end{proof}

\subsection{Procedure to estimate variation of mollified energy}
\label{Subsec:Procedure}

Recall the Coifman-Meyer theorem

\begin{thm}[\cite{coifmeyer}, p 178]
Let $k \in \mathbb{N}$ and $\sigma: \mathbb{R}^{2k} \rightarrow \mathbb{C}$ be an infinitely differentiable symbol such that for
all $\xi:= (\xi_{1},...\xi_{k}) \in \mathbb{R}^{2k}$ and for all $\alpha \in \mathbb{N}^{2k}$
\begin{equation}
|\partial_{\xi}^{\alpha} \sigma (\xi)| \lesssim_{\alpha} \frac{1}{(1 + |\xi|)^{|\alpha|}}.
\label{Eqn:Smoothmult}
\end{equation}
Then, defining
\begin{equation}
\Lambda(f_{1},...,f_{k})(x) := \int_{\mathbb{R}^{2k}}  \sigma(\xi_{1},...,\xi_{k}) \hat{f}_{1}(\xi_{1}).... \hat{f}_{k}(\xi_{k})
e^{2i \pi (\xi_{1} + ... + \xi_{k}) \cdot x}  \, d \xi_{1}....
d \xi_{k},
\nonumber
\end{equation}
we have

\begin{equation}
\| \Lambda(f_{1},...,f_{k}) \|_{L^{p}(\mathbb{R}^{2})} \lesssim \| f_{1} \|_{L^{p_{1}}(\mathbb{R}^{2})}... \| f_{k} \|_{L^{p_{k}}(\mathbb{R}^{2})},
\nonumber
\end{equation}
with $(p_{1},...,p_{k}) \in (1, \infty)^{k}$ such that $\frac{1}{p}= \frac{1}{p_{1}} +....+ \frac{1}{p_{k}} \leq 1$.
\end{thm}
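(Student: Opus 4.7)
The plan is to prove the Coifman-Meyer theorem by a dyadic decomposition of the symbol followed by a reduction to standard paraproducts. First I would Littlewood-Paley decompose $\sigma = \sum_{j \geq 0} \sigma_j$ with $\sigma_j$ a smooth cutoff of $\sigma$ to the shell $\{|\xi| \sim 2^j\} \subset \mathbb{R}^{2k}$ (and $\sigma_0$ handling $|\xi| \lesssim 1$). The hypothesis \eqref{Eqn:Smoothmult} gives the scale invariant estimate $|\partial^\alpha \sigma_j(\xi)| \lesssim_\alpha 2^{-j|\alpha|}$ on the support of $\sigma_j$, uniformly in $j$, which is the key smoothness we will exploit.

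Then on each dyadic shell I would expand $\sigma_j$ in a Fourier series adapted to a cube of sidelength $\sim 2^j$, obtaining
\begin{equation}
\sigma_j(\xi_1,\ldots,\xi_k) = \chi_j(\xi_1) \cdots \chi_j(\xi_k) \sum_{\vec{n} \in \mathbb{Z}^{2k}} c^{(j)}_{\vec{n}} \, e^{2\pi i \, 2^{-j} \vec{n} \cdot \xi},
\nonumber
\end{equation}
where $\chi_j$ is a bump adapted to $|\xi_i| \lesssim 2^j$ and the coefficients $c^{(j)}_{\vec{n}}$ decay rapidly in $|\vec{n}|$ uniformly in $j$, thanks to the uniform $C^\infty$ bounds on $\sigma_j$. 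Inserting this into $\Lambda$ and inverting the Fourier transform turns each term into a constant multiple of a product of translates of Littlewood-Paley projections $\Delta_j^{(i)} f_i$, the translations being controlled by $2^{-j}\vec{n}$. So modulo an absolutely summable sum in $\vec{n}$, the operator is reduced to the finitely many paraproducts of the form $\sum_j \prod_i \Delta_j f_i$ and their siblings where some of the factors are replaced by low frequency projections $S_j f_i := \sum_{j' \leq j} \Delta_{j'} f_i$. The support condition $|\xi_1 + \cdots + \xi_k| \lesssim 2^j$ forces at least two indices to sit at frequency $\sim 2^j$, which dictates exactly which paraproduct shapes appear.

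For each such paraproduct the $L^p$ estimate follows from the Littlewood-Paley square function equivalence $\|f\|_{L^q} \sim \|(\sum_j |\Delta_j f|^2)^{1/2}\|_{L^q}$ for $1 < q < \infty$, combined with H\"older's inequality and the Fefferman-Stein vector valued maximal inequality. For the model case with one high frequency factor and $k-1$ low frequency ones, one has the pointwise bound
\begin{equation}
\left| \sum_j (\Delta_j f_1) \prod_{i \geq 2} (S_j f_i) \right| \lesssim \left( \sum_j |\Delta_j f_1|^2 \right)^{1/2} \prod_{i \geq 2} \mathcal{M} f_i,
\nonumber
\end{equation}
where $\mathcal{M}$ is the Hardy-Littlewood maximal operator, and H\"older closes the estimate. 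The main obstacle is the symmetric case where two or more inputs carry the dyadic frequency $\sim 2^j$: there the pointwise square function bound is unavailable, and one instead argues by duality, pairing the output with an auxiliary $L^{p'}$ function (this is the step where the hypothesis $1/p \leq 1$ enters, since one needs $p' \in [1,\infty)$), and then invoking Cauchy-Schwarz to isolate two square functions with the remaining factors controlled by maximal functions. The rapid decay of the $c^{(j)}_{\vec{n}}$ ensures that all the sums in $\vec{n}$ are absolutely convergent and do not spoil the final bound.
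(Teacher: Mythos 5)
The paper does not prove this theorem: the bracket $[\cite{coifmeyer}, \text{p } 178]$ following the theorem environment is a citation to Coifman and Meyer's 1978 article, and the result is imported as a black box (combined further on with the transference result of \cite{fansato} to pass from $\mathbb{R}^{2}$ to $\mathbb{T}^{2}$). There is therefore no in-paper argument to compare against. Your outline is nonetheless a correct sketch of the now-standard modern proof: dyadic shell decomposition of $\sigma$ with scale-invariant derivative bounds, Fourier-series expansion on each shell to reduce to tensor products of bump multipliers with rapidly and uniformly decaying coefficients, reduction to a finite list of paraproduct shapes, and then Littlewood--Paley square functions together with the Fefferman--Stein vector-valued maximal inequality, with a duality step in the high--high case (which is indeed where the hypothesis $1/p \leq 1$ is used). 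This is a genuinely different route from the one in the cited reference: Coifman and Meyer proceed via multilinear Calder\'on--Zygmund theory, realizing $\Lambda$ as a singular integral, verifying kernel estimates, and interpolating from an endpoint weak-type bound, without the explicit paraproduct dichotomy; the paraproduct argument you sketch is the one found in most later expositions and is arguably more transparent, at the cost of the Fourier-expansion bookkeeping. One minor imprecision in your sketch: the support condition $|\xi|\sim 2^{j}$ on the full $2k$-dimensional frequency vector forces only $\max_{i}|\xi_{i}|\sim 2^{j}$, i.e.\ at least \emph{one} input at the top scale, not two; a second high factor is forced only in the regime where the output frequency $\xi_{1}+\cdots+\xi_{k}$ is much smaller than $2^{j}$, which is precisely the high--high paraproduct regime. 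This does not affect the conclusion, since the estimates you invoke cover both the one-high and the two-high shapes.
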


\begin{rem}
The theorem also holds if (\ref{Eqn:Smoothmult}) is substituted for

\begin{equation}
|\partial_{\xi}^{\alpha} \sigma (\xi)| \lesssim_{\alpha} \prod \limits_{m=1}^{k} |\xi_{m}|^{- \alpha_{m}}.
\label{Eqn:Smoothmul2}
\end{equation}
See e.g \cite{muscalu}.

\end{rem}




Hence, from Theorem 3, p 39 of \cite{fansato} we see that we also have

\begin{equation}
\| \tilde{\Lambda}(f_{1},...,f_{k}) \|_{L^{p}(\mathbb{T}^{2})} \lesssim \| f_{1} \|_{L^{p_{1}}(\mathbb{T}^{2})} ... \| f_{k} \|_{L^{p_{k}}(\mathbb{T}^{2})},
\label{Eqn:CoifMeyTorus}
\end{equation}
with

\begin{equation}
\tilde{\Lambda}(f_{1},...,f_{k})(x) := \sum_{(n_{1},...,n_{k}) \in \mathbb{Z}^{2k}} \sigma(n_{1},..,n_{k})  \hat{f}_{1}(n_{1})...
\hat{f}_{k}(n_{k}) e^{ 2 i \pi (n_{1} + ... + n_{k}).x} \cdot
\nonumber
\end{equation}
Next we use an argument in \cite{scatttao}. More precisely define for $ k \in \{ \alpha,\beta \} $

\begin{equation}
\begin{array}{l}
\tilde{\Lambda}_{0,k}(f,g)(x) := \sum \limits_{(n_{1},n_{2}) \in (\mathbb{Z}^{2})^{2}}  \sigma_{0,k}(n_{1},n_{2})
\widehat{f}(n_{1})  \widehat{g}(n_{2})  e^{ 2 i \pi (n_{1}+n_{2}) \cdot x }, \\
j \in \{ 1,..,4\}: \; \tilde{\Lambda}_{j,k}(f)(x) := \sum \limits_{n_{j} \in \mathbb{Z}^{2}}  \sigma_{j,k}(n_{j})
\widehat{f}(n_{j}) e^{ 2 i \pi n_{j} \cdot x } \cdot
\end{array}
\nonumber
\end{equation}

\begin{itemize}

\item For Case $1.a$ we have $X_{\vec{M}} = X_{\alpha,\vec{M}} + X_{\beta,\vec{M}}$ with

\begin{equation}
k \in \{\alpha,\beta \}: \; X_{k,\vec{M}} = B \sum \limits_{n_{0} \in \mathbb{Z}^{2}} \int_{J}
\widehat{\tilde{\Lambda}}_{0,k} (\partial_{t} I u_1(t), I u_2(t))(n_{0})
\left( \overline{ \widehat{\tilde{\Lambda}}_{3,k}(I u_3(t)) \ast \widehat{\tilde{\Lambda}}_{4,k}(I u_4(t))} \right)  (n_0)
 \, dt,
\nonumber
\end{equation}
and $B$ being the right-hand side of (\ref{Eqn:MultCase1a}). Here

\begin{equation}
\begin{array}{l}
\sigma_{0,\alpha} (\xi_1,\xi_2) := \tilde{\phi}_{M_{1}}(\xi_{1})
\tilde{\phi}_{M_{2}}(\xi_{2}), \; \text{and} \\
j \in \{ 3,4 \}: \; \sigma_{j,\alpha} (\xi_j) := m(M_j) \tilde{\phi}_{M_{j}}(\xi_{j}), \\
\\
\sigma_{0,\beta} (\xi_1,\xi_2) := - \frac{m(\xi_1)}{m(\xi_2)} \tilde{\phi}_{M_{1}}(\xi_{1})  \tilde{\phi}_{M_{2}}(\xi_{2}), \; \text{and} \\
j \in \{ 3,4 \}: \; \sigma_{j,\beta} (\xi_j) :=  \frac{m(M_j)}{m(\xi_j)} \tilde{\phi}_{M_{j}}(\xi_{j}) \cdot
\end{array}
\nonumber
\end{equation}
 Here  $\tilde{\phi}_{M} (\xi) := \tilde{\phi} \left( \frac{\xi}{M} \right)$ if $M \in 2^{\mathbb{N}}$ and
$ \tilde{\phi}_{M} (\xi) := \tilde{\psi} (\xi)$ if $M =0$. \\
One can check that the symbols  satisfy (\ref{Eqn:Smoothmult}). By using Plancherel, H\"older, Young, and (\ref{Eqn:CoifMeyTorus}), we get (\ref{Eqn:ConclStrategy}).\\

\item For Case $2$ we write $ X_{\vec{M}} = X_{\alpha,\vec{M}} + X_{\beta,\vec{M}}$ with

\begin{equation}
\begin{array}{ll}
k \in \{ \alpha,\beta \}: \; X_{k,\vec{M}} & = B \sum \limits_{n_{0} \in \mathbb{Z}^{2}} \int_{J}
\widehat{\tilde{\Lambda}}_{0,k}(I u_2(t),I u_3 (t)) (n_0)
\left( \overline{ \widehat{\tilde{\Lambda}}_{1,k} (\partial_{t} I u_1(t)) \ast
\widehat{\tilde{\Lambda}}_{4,k}(I u_4(t))} \right)(n_0)
\, dt,
\end{array}
\nonumber
\end{equation}
with $B$ being the right-hand side of (\ref{Eqn:EstMultCase2}),

\begin{equation}
\begin{array}{l}
\sigma_{1,\alpha} (\xi_1) := \frac{m(M_2)}{m(M_1)} \tilde{\phi}_{M_{1}}(\xi_{1}), \; \\
\sigma_{0,\alpha} (\xi_2,\xi_3) := m(M_2) \tilde{\phi}_{M_{2}}(\xi_{2}) \tilde{\phi}_{M_{3}}(\xi_{3}), \; \text{and} \\
\sigma_{4,\alpha} (\xi_4) := m(M_4) \tilde{\phi}_{M_{4}}(\xi_{4})  \\
\\
\sigma_{1,\beta} (\xi_1) := - \frac{m(\xi_1)}{m(M_1)} \tilde{\phi}_{M_{1}}(\xi_{1})   \; \text{and} \\
\sigma_{0,\beta} (\xi_2,\xi_3) :=  \frac{m^{2}(M_2)}{m(\xi_2) m(\xi_3)} \tilde{\phi}_{M_{2}}(\xi_{2}) \tilde{\phi}_{M_{3}}(\xi_{3}), \; \text{and}  \\
\sigma_{4,\beta} (\xi_4) := \frac{m(M_4)}{m(\xi_4)} \tilde{\phi}_{M_{4}}(\xi_{4}) \cdot
\end{array}
\nonumber
\end{equation}
One can check that the symbols  satisfy (\ref{Eqn:Smoothmult}). Again by using Plancherel, H\"older, Young, and (\ref{Eqn:CoifMeyTorus}),
we get (\ref{Eqn:ConclStrategy}).\\
\\
\item For Case $1.b$ we define

\begin{equation}
\begin{array}{l}
\tilde{\Lambda}(f,g,h)(x) :=  \sum \limits_{(n_{2},n_{3},n_4) \in (\mathbb{Z}^{2})^{2}} \sigma(n_2,n_3,n_4) \widehat{f}(n_{2})\widehat{g}(n_{3}) \widehat{h}(n_{4}) e^{ 2 i \pi (n_2 + n_3 + n_4) \cdot x },
\end{array}
\nonumber
\end{equation}
with

\begin{equation}
\sigma(\xi_2,\xi_3,\xi_4) := B^{-1} \left( 1 - \frac{m(\xi_2+\xi_3+\xi_4)}{m(\xi_2)} \right)
\prod \limits_{l=2}^{4} \tilde{\phi}_{M_l}(\xi_l),
\nonumber
\end{equation}
and $B$ being the right-hand side of (\ref{Eqn:MultCase1b}). We have

\begin{equation}
X_{\vec{M}} = B \sum_{n_{0} \in \mathbb{Z}^{2}} \int_{J}
\overline{\widehat{\partial_{t} I u_1(t)}}(n_{0})
 \widehat{\tilde{\Lambda}} (I u_2(t), I u_3(t),I u_4(t))    (n_0)
 \, dt \cdot
 \nonumber
\end{equation}
One can check that the symbol satisfies (\ref{Eqn:Smoothmul2}). Again by using
Plancherel, H\"older, and (\ref{Eqn:CoifMeyTorus}), we get (\ref{Eqn:ConclStrategy}).

\end{itemize}




\begin{thebibliography}{99}

\bibitem{bourgbook} J. Bourgain, \emph{New global well-posedness results for non-linear Schr\"odinger equations}, AMS Publications, 1999.

\bibitem{christweins} M. Christ and M. Weinstein, \emph{Dispersion of small amplitude solutions of the general Korteweg-de-Vries equation}, J.
Func. Analysis 100 (1991), 87-109.

\bibitem{coifmeyer} R. Coifman and Y. Meyer, \emph{Commutateurs d'int\'egrales singuli\`eres et op\'erateurs multilin\'eaires}, Ann. Inst. Fourier (Grenoble), 28: 177-202, 1978.

\bibitem{almckstt} J.Colliander, M.Keel, G.Staffilani, H.Takaoka, T.Tao,
\emph{Almost conservation laws and global rough solutions to a nonlinear Schr\"odinger equation}, Math. Res. Letters 9 (2002), pp. 659-682.

\bibitem{scatttao} J. Colliander, M. Keel, G. Staffilani, H. Takaoka and T. Tao, \emph{Global existence and scattering for rough solutions of a nonlinear
Schrodinger equation in $\mathbb{R}^{3}$}, CPAM 57 (2004), 987-1014.

\bibitem{fansato} D. Fan and S. Sato, \emph{Transference on certain  multilinear multiplier operators}, Journal of the Australian
Mathematical Society / Volume 70 / Issue 01 / February 2001, pp 37-55.

\bibitem{ginebvelo} J. Ginebre, G. Velo, \emph{Generalized Strichartz inequalities for the
wave equation}, Jour. Func. Anal., \textbf{133} (1995), 50--68.

\bibitem{kap} L. Kapitanski, \emph{Some generalizations of the Strichartz-Brenner inequality}, Leningrad Math. J., pages 693-726, 1990.

\bibitem{KatoPonce} T. Kato and G. Ponce, \emph{Commutator estimates and the Euler and Navier-Stokes equations}, Comm, Pure. Appl. Math. 41 (1988), 891-907.

\bibitem{keeltao} M. Keel and T. Tao, \emph{Endpoint Strichartz estimates}, Amer. J. Math, 120 (1998), 955-980.

\bibitem{KenigPonceVega} C. E Kenig, G. Ponce, and L. Vega, \emph{Well-Posedness and scattering results for the generalized Korteweg-de Vries
Equation via the contraction principle}, Communications on Pure and Applied Mathematics, Vol. XLVI, 527-620 (1993).

\bibitem{linsog} H. Lindblad, C. D Sogge, \emph{On existence and scattering with minimal regularity for semilinear wave equations}, J.
Func.Anal 219 (1995), 227-252.

\bibitem{muscalu} C. Muscalu, J. Pipher, T. Tao, and C. Thiele, \emph{Multi-parameter paraproducts}, Revista mat. Ibericoamericana, 22(2006), 963-976.

\bibitem{troy} T. Roy, \emph{Adapted linear-nonlinear decomposition and global well-posedness for solutions to the defocusing cubic wave equation
on $\mathbb{R}^{3}$}, Disc. Cont. Dynam. Systems A, Vol 24, Number 4, 2009, 1307-1323.

\bibitem{taylor} M. Taylor, \emph{Tools for PDE, Pseudodifferential Operators, Paradifferential Operators, and Layer Potentials}, Mathematical
Surveys and Monographs, 81. American Mathematical Society, Providence, RI, 2000.







\end{thebibliography}
\end{document}